\newtheorem{theorem}{Theorem}[section]
\newtheorem{corollary}{Corollary}[section]
\newtheorem{lemma}{Lemma}[section]
\newtheorem{claim}{Claim}[section]
\def\R{{\mathfrak R}\, }
\def\D{{\mathfrak D}\, }
\def\A{{\mathfrak A}\, }
\begin{document}
\begin{center}{\bf \LARGE $*$-Jordan-type maps on $C^*$-algebras}\\
\vspace{.2in}
\noindent {\bf Bruno Leonardo Macedo Ferreira}\\
{\it Federal University of Technology,\\
Avenida Professora Laura Pacheco Bastos, 800,\\
85053-510, Guarapuava, Brazil.}\\
e-mail: brunoferreira@utfpr.edu.br

\vspace{.2in}
and
\\
\vspace{.2in}

\noindent {\bf Bruno Tadeu Costa}\\
{\it Federal University of Santa Catarina,\\
Rua Jo\~{a}o Pessoa, 2750,\\   
89036-256, Blumenau, Brazil.}\\
e-mail: b.t.costa@ufsc.br

\end{center}
\begin{abstract} 
Let $\A$ and $\A'$ be two  
$C^*$-algebras with identities $I_{\A}$ and $I_{\A'}$, respectively, and $P_1$ and $P_2 = I_{\A} - P_1$ nontrivial 
projections in $\A$. In this paper we study the  characterization of multiplicative $*$-Jordan-type maps. In particular, 
if $\mathcal{M}$ is a factor von Neumann algebra then every bijective unital multiplicative $*$-Jordan-type maps are $*$-ring isomorphisms.
\end{abstract}
{\bf {\it AMS 2010 Subject Classification:}} 	17D05, 46L05.\\
{\bf {\it Keywords:}} $C^*$-algebra, multiplicative $*$-Jordan-type maps\\

\section{Introduction and Preliminaries}

Historically, the study of additivity of maps have received a fair amount of attention of mathematicians. The first quite surprising result is due to Martindale who established a condition on a ring such that multiplicative bijective maps are all additive \cite[Theorem]{Mart}. Besides, over the years several works have been published considering different types of associative and non-associative algebras. Among them we can mention \cite{Fer, Fer1, Fer2, Fer3, Fer4, bruth, FerGur1, chang}.
In order to add new ingredients to the study of additivity of maps, many researches have devoted themselves to the investigation of two new products, presented by Bre$\check{s}$ar and Fo$\check{s}$ner in \cite{brefos1, brefos2}, where the definition is as follows: for $A, B \in R$, where $R$ is a $*-$ring, we denote by $A\bullet B = AB+BA^{*}$ and $[A, B]_{*} = AB-BA^{*}$ the $*$-Jordan product and the $*$-Lie product, respectively. In \cite{LiLuFang}, the authors proved that a map $\Phi$ between two factor von Newmann algebras is a $*$-ring isomorphism if and only if $\Phi(A\bullet B) = \Phi(A)\bullet \Phi(B)$. In \cite{Ferco}, Ferreira and Costa extended these new products and defined two other types of applications, named multiplicative $*$-Lie n-map and multiplicative $*$-Jordan n-map and used it to impose condition such that a map between $C^*$-algebras is a $*$-ring isomorphism. With this picture in mind, in this article we will discuss when a multiplicative $*$-Jordan n-map is a $*$-ring isomorphism. As a consequence of our main result, we provide an application on von Neumann algebras, factor von Neumann algebras and prime algebras. Furthermore, we generalize the Main Theorem in \cite{LiLuFang}.

Let us define the following sequence of polynomials, as defined in \cite{Ferco}: 
$$q_{1*}(x) = x\, \,  \text{and}\, \,  q_{n*}(x_1, x_2, \ldots , x_n) = \left\{q_{(n-1)*}(x_1, x_2, \ldots , x_{n-1}) , x_n\right\}_{*},$$
for all integers $n \geq 2$. Thus, $q_{2*}(x_1, x_2) = \left\{x_1, x_2\right\}_{*}, \ q_{3*} (x_1, x_2, x_3) = \left\{\left\{x_1, x_2\right\}_{*} , x_3\right\}_{*}$, etc. Note that $q_{2*}$ is the product introduced by Bre$\check{s}$ar and Fo$\check{s}$ner \cite{brefos1, brefos2}. Then, using the nomenclature introduced in \cite{Ferco} we have a new class of maps (not necessarily additive): $\varphi : \R \longrightarrow \R'$ is a \textit{multiplicative $*$-Jordan $n$-map} if
\begin{eqnarray*}\label{ident1}
\\&&\varphi(q_{n*} (x_1, x_2, . . . ,x_n)) =  q_{n*} (\varphi(x_1), \varphi(x_2), . . . , \varphi(x_i), . . .,\varphi(x_n)),
\end{eqnarray*}
where $n \geq 2$ is an integer.
Multiplicative $*$-Jordan $2$-map, $*$-Jordan $3$-map 
and $*$-Jordan $n$-map are collectively referred to as \textit{multiplicative $*$-Jordan-type maps}.

By a $C^*$-algebra we mean a complete normed complex algebra (say $\A$) endowed with a conjugate-linear algebra involution $*$, 
satisfying $||a^*a||=||a||^2$ for all $a \in \A$. Moreover, a $C^*$-algebra is a {\it prime} $C^*$-algebra if $A\A B = 0$ for 
$A,B \in \A$ implies either $A=0$ or $B=0$.

We find it convenient to invoke the noted Gelfand-Naimark theorem that state a $C^*$-algebra $\A$ is $*$-isomorphic to a
$C^*$-subalgebra $\D \subset \mathcal{B}(\mathcal{H})$, where $\mathcal{H}$ is a Hilbert space. So from now on we shall
consider elements of a $C^*$-algebra as operators.

Let be $P_1$ a nontrivial projection in $\A$ and $P_2 = I_{\A} - P_1$ where $I_{\A}$ is the identity of $\A$. Then $\mathfrak{A}$ has a decomposition
$\mathfrak{A}=\mathfrak{A}_{11}\oplus \mathfrak{A}_{12}\oplus
\mathfrak{A}_{21}\oplus \mathfrak{A}_{22},$ where
$\mathfrak{A}_{ij}=P_{i}\mathfrak{A}P_{j}$ $(i,j=1,2)$.

The following two claims play a very important role in the further development of the paper. By definition of involution clearly we get
 
\begin{claim}\label{obs}
$*(\A_{ij}) \subseteq \A_{ji}$ for $i,j \in \left\{1,2\right\}.$
\end{claim}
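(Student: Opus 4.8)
The plan is to verify the inclusion directly on an arbitrary element of $\mathfrak{A}_{ij}$, exploiting only two structural facts: that the involution is a conjugate-linear anti-homomorphism, so that $(xy)^* = y^* x^*$, and that $P_1, P_2$ are projections in the $C^*$-algebra sense, hence self-adjoint idempotents with $P_i^* = P_i$. Since the claim is an inclusion of sets and both sides are determined by the sandwiching projections, it suffices to chase where a single generic product is sent.

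First I would fix $i,j \in \{1,2\}$ and take an arbitrary $a \in \mathfrak{A}_{ij} = P_i \mathfrak{A} P_j$, writing $a = P_i x P_j$ for some $x \in \mathfrak{A}$. Next I would apply the involution and use its anti-multiplicativity together with $P_i^* = P_i$ to compute $a^* = (P_i x P_j)^* = P_j^* x^* P_i^* = P_j x^* P_i$. The final step is simply to read off the membership: since $x^* \in \mathfrak{A}$, the element $P_j x^* P_i$ lies in $P_j \mathfrak{A} P_i = \mathfrak{A}_{ji}$, which gives $*(\mathfrak{A}_{ij}) \subseteq \mathfrak{A}_{ji}$ for each choice of $i,j$.

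I do not anticipate any genuine obstacle here; the statement is essentially the bookkeeping consequence of the involution reversing the order of a product while fixing each $P_i$. The only point deserving a moment of care is confirming that the $P_i$ are self-adjoint: this is exactly the definition of a projection in a $C^*$-algebra (a self-adjoint idempotent), and $P_2 = I_{\mathfrak{A}} - P_1$ inherits self-adjointness from $P_1$ together with the self-adjointness of the identity, so the computation above is valid for both indices.
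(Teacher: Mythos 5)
Your proof is correct and is exactly the computation the paper has in mind: the paper states this claim without proof ("By definition of involution clearly we get"), and your verification that $(P_i x P_j)^* = P_j x^* P_i \in P_j \mathfrak{A} P_i$, using anti-multiplicativity of $*$ and self-adjointness of the projections, is precisely the omitted bookkeeping.
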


\begin{claim}\label{Claim2} Let $\A$ and $\A'$ be two $C^*$-algebras and $\varphi: \A \rightarrow \A'$ a bijective map which satisfies
\begin{eqnarray*}
 &&\varphi(q_{n*}(I_\A,..., I_\A, A,B)) = q_{n*}(\varphi(I_\A),...., \varphi(I_\A), \varphi(A),\varphi(B))
\\&and&
\\&&\varphi(q_{n*}(P, ..., P, A, B)) = q_{n*}(\varphi(P), ..., \varphi(P),\varphi(A),\varphi(B)),
\end{eqnarray*}
 for all $A, B \in \A$ and $P \in \left\{P_1, P_2\right\}$. Let $X,Y$ and $H$ be in $\A$ such that $\varphi(H) = \varphi(X) + \varphi(Y)$. Then,
 given $Z \in \A$,
$$
\begin{aligned}
\varphi(q_{n*}(T,...,T,H,Z)) &= \varphi(q_{n*}(T,...,T,X,Z)) \\
                                                   &+ \varphi(q_{n*}(T,...,T,Y,Z))
\end{aligned}
$$
and
$$
\begin{aligned}
\varphi(q_{n*}(T,...,T,Z,H)) &= \varphi(q_{n*}(T,...,T,Z,X)) \\
                                                   &+ \varphi(q_{n*}(T,...,T,Z,Y))
\end{aligned}
$$
for $T = I_\A$ or $T = P$.

\end{claim}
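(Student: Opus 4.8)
The plan is to isolate a single structural fact and then let it do all the work: the $*$-Jordan product $\{A,B\}_* = AB + BA^*$ is additive in each of its two entries, and, through the recursion defining $q_{n*}$, this additivity passes to every one of the $n$ arguments of $q_{n*}$. First I would record the two elementary identities $\{U_1 + U_2, V\}_* = \{U_1,V\}_* + \{U_2,V\}_*$ and $\{U, V_1 + V_2\}_* = \{U,V_1\}_* + \{U,V_2\}_*$, which hold because the involution is additive. An induction on $n$ then yields that, for any fixed entries in the remaining slots,
\[
q_{n*}(\ldots, U_1 + U_2, \ldots) = q_{n*}(\ldots, U_1, \ldots) + q_{n*}(\ldots, U_2, \ldots);
\]
in the inductive step, additivity in the last slot comes directly from additivity of $\{\cdot,\cdot\}_*$ in its second entry, while additivity in any earlier slot comes from the induction hypothesis for $q_{(n-1)*}$ followed by additivity of $\{\cdot,\cdot\}_*$ in its first entry.

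With this lemma available, the first asserted identity is a short chain. I would apply the multiplicative hypothesis of the claim (with $T$ in the leading slots and $(A,B) = (H,Z)$) to write
\[
\varphi(q_{n*}(T,\ldots,T,H,Z)) = q_{n*}(\varphi(T),\ldots,\varphi(T),\varphi(H),\varphi(Z)),
\]
then substitute $\varphi(H) = \varphi(X) + \varphi(Y)$ and split the $(n-1)$-th argument using the additivity lemma to obtain
\[
q_{n*}(\varphi(T),\ldots,\varphi(T),\varphi(X),\varphi(Z)) + q_{n*}(\varphi(T),\ldots,\varphi(T),\varphi(Y),\varphi(Z)).
\]
Finally I would run the multiplicative hypothesis backwards on each summand, once with $(A,B)=(X,Z)$ and once with $(A,B)=(Y,Z)$, recovering $\varphi(q_{n*}(T,\ldots,T,X,Z)) + \varphi(q_{n*}(T,\ldots,T,Y,Z))$, which is precisely the first equation. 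The second equation follows by the identical three steps, splitting the final ($n$-th) argument in place of the $(n-1)$-th.

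The case $T = I_\A$ and the case $T = P \in \{P_1,P_2\}$ need no separate handling, since the claim supplies the relevant multiplicative identity in both cases and the chain above uses only that identity. The hard part is essentially nonexistent here: the whole argument is a formal manipulation, and the only place meriting attention is the bookkeeping in the induction, where one must check that additivity in an \emph{inner} argument genuinely survives the outer application of $\{\cdot,\cdot\}_*$ — which it does, precisely because that outer product is additive in its first entry. Everything else is forced by the way the hypotheses of the claim convert sums appearing inside $q_{n*}$ into sums of their $\varphi$-images.
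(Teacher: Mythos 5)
Your proposal is correct and follows essentially the same route as the paper: apply the multiplicative hypothesis, substitute $\varphi(H)=\varphi(X)+\varphi(Y)$, split using additivity of $q_{n*}$ in the relevant slot, and run the hypothesis backwards on each summand. The only difference is that you carefully prove (by induction from the additivity of $\{\cdot,\cdot\}_*$ in each entry) the argument-wise additivity of $q_{n*}$, which the paper simply invokes as ``multilinearity'' without proof --- your version is in fact the more precise statement, since $q_{n*}$ is additive but not linear in its arguments owing to the involution.
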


\begin{proof}
Using the definition of $\varphi$ and multilinearity of $q_{n*}$ we obtain
$$
\begin{aligned}
\varphi(q_{n*}(T,...,T,H,Z)) &= q_{n*}(\varphi(T),...,\varphi(T),\varphi(H),\varphi(Z)) \\
                             &= q_{n*}(\varphi(T),...,\varphi(T),\varphi(X)+\varphi(Y),\varphi(Z)) \\
                             &= q_{n*}(\varphi(T),...,\varphi(T),\varphi(X),\varphi(Z)) \\
                             &+ q_{n*}(\varphi(T),...,\varphi(T),\varphi(Y),\varphi(Z)) \\
                             &= \varphi(q_{n*}(T,...,T,X,Z)) \\
                             &+ \varphi(q_{n*}(T,...,T,Y,Z)).
\end{aligned}
$$
In a similar way we have
$$
\begin{aligned}
\varphi(q_{n*}(T,...,T,Z,H)) &= \varphi(q_{n*}(T,...,T,Z,X)) \\
                                                   &+ \varphi(q_{n*}(T,...,T,Z,Y)).
\end{aligned}
$$

\end{proof}






\section{Main theorem}

We shall prove as follows a part of the the main result of this paper:

\begin{theorem}\label{mainthm1} 
Let $\A$ and $\A'$ be two $C^*$-algebras with identities $I_{\A}$ and $I_{\A'}$, respectively, and $P_1$ and $P_2 = I_{\A} - P_1$ nontrivial projections in $\A$. Suppose that $\A$ satisfies
\begin{eqnarray*}
  &&\left(\spadesuit\right) \ \ \  \ \ \  X \A P_i = \left\{0\right\} \ \ \  \mbox{implies} \ \ \ X = 0.
\end{eqnarray*}
Even more, suppose that $\varphi: \A \rightarrow \A'$ is a bijective unital map which satisfies
\begin{eqnarray*}
 &&\varphi(q_{n*}(I_\A,..., I_\A, A,B)) = q_{n*}(\varphi(I_\A),...., \varphi(I_\A), \varphi(A),\varphi(B))
\\&and&
\\&&\varphi(q_{n*}(P, ..., P, A, B)) = q_{n*}(\varphi(P), ..., \varphi(P),\varphi(A),\varphi(B)),
\end{eqnarray*}
 for all $A, B \in \A$ and $P \in \left\{P_1, P_2\right\}$. Then $\varphi$ is $*$-additive.
\end{theorem}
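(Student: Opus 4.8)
The plan is to run the Peirce-decomposition argument of Martindale type, in the spirit of \cite{LiLuFang, Ferco}, with Claim~\ref{Claim2} as the linearisation device and the hypothesis $(\spadesuit)$ as the replacement for primeness. Before the main work I would record three elementary facts. First, $\varphi(0)=0$: if $A_0=\varphi^{-1}(0)$, then feeding $A_0$ into the $(n-1)$-st slot of the $P$-identity makes the right-hand side $q_{n*}(\varphi(P),\dots,\varphi(P),0,\varphi(B))$ vanish, so injectivity forces $q_{n*}(P,\dots,P,A_0,B)=A_0$ for every $B$, and the choice $B=0$ gives $A_0=0$. Second, using $q_{k*}(I_\A,\dots,I_\A)=2^{k-1}I_\A$ and $q_{k*}(P,\dots,P)=2^{k-1}P$, the two admissible products collapse to
\[
q_{n*}(I_\A,\dots,I_\A,A,B)=2^{\,n-2}(AB+BA^*),\qquad
q_{n*}(P,\dots,P,A,B)=2^{\,n-3}\big[(PA+AP)B+B(PA^*+A^*P)\big].
\]
Thus the $I_\A$-product is, up to a scalar, the plain $*$-Jordan product $A\bullet B$, while the $P_i$-products resolve the Peirce grading $\A=\bigoplus_{i,j}\A_{ij}$ through $PA+AP$; both families are needed, since neither alone determines $A$.

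The heart of the proof is to promote Claim~\ref{Claim2} to genuine additivity, proceeding in the usual order on the Peirce blocks so that the bootstrapping never presupposes what it proves. For fixed $A,B$ put $H=\varphi^{-1}(\varphi(A)+\varphi(B))$, so that $\varphi(A+B)=\varphi(A)+\varphi(B)$ is equivalent to $H=A+B$. For each $Z$ and each $T\in\{I_\A,P_1,P_2\}$, Claim~\ref{Claim2} gives $\varphi(q_{n*}(T,\dots,T,H,Z))=\varphi(q_{n*}(T,\dots,T,A,Z))+\varphi(q_{n*}(T,\dots,T,B,Z))$ (and the analogue with $H$ in the last slot); once $\varphi$ is already known to be additive on the two summands, injectivity upgrades this to $q_{n*}(T,\dots,T,D,Z)=0$, where $D=H-(A+B)$. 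I would therefore first treat $A,B$ in complementary off-diagonal blocks (say $A\in\A_{12}$, $B\in\A_{21}$), where the summands fall into disjoint blocks and no circularity arises, then the full decomposition $\varphi(\sum_{i,j}A_{ij})=\sum_{i,j}\varphi(A_{ij})$, then additivity inside $\A_{12},\A_{21}$ (here Claim~\ref{obs}, $*(\A_{ij})\subseteq\A_{ji}$, locates the adjoint terms), and finally additivity inside the diagonal blocks $\A_{11},\A_{22}$. Once every block lemma holds, full additivity is a formality: decomposing $A=\sum A_{ij}$, $B=\sum B_{ij}$ and combining the decomposition lemma with the within-block additivity yields $\varphi(A+B)=\varphi(A)+\varphi(B)$.

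The diagonal-block lemma is where $(\spadesuit)$ enters and where the difficulty lies. Taking $D=H-(A_{11}+B_{11})$, the relation $q_{n*}(T,\dots,T,D,Z)=0$ for all $Z$ and all $T$ unpacks as follows. The $I_\A$-product gives $DZ+ZD^*=0$ for every $Z$, whence (setting $Z=I_\A$) $D^*=-D$ and then $[D,Z]=0$, so $D$ is skew and central. The $P_1$-product then reduces to $[P_1D,Z]=0$ for all $Z$, so $P_1D$ is central; being central it satisfies $(P_1D)P_2=P_2(P_1D)=0$, hence $(P_1D)\,\A\,P_2=\{0\}$, and $(\spadesuit)$ forces $P_1D=0$; symmetrically $P_2D=0$, so $D=0$. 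The main obstacle is exactly this degeneracy: because both admissible products symmetrise $A$ with $A^*$, the $I_\A$-test cannot distinguish $D$ from a skew-central element such as $it\,I_\A$, and the delicate point is to marshal the $P_i$-tests together with $(\spadesuit)$ to annihilate that residual skew-central part—precisely on the diagonal blocks, where the bracket degenerates—without ever circularly invoking the additivity being established.
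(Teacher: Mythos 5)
Your global strategy (Peirce blocks, Claim~\ref{Claim2} as the linearisation device, $(\spadesuit)$ at the end) is the paper's, but your treatment of the diagonal blocks --- which you yourself identify as the crux --- contains a genuine circularity. Claim~\ref{Claim2} together with injectivity yields $q_{n*}(T,\dots,T,D,Z)=0$ \emph{only} for those $Z$ for which $\varphi$ is already known to be additive on the specific pair of summands $q_{n*}(T,\dots,T,A_{11},Z)$ and $q_{n*}(T,\dots,T,B_{11},Z)$; you state this caveat correctly, but then assert the relation ``for all $Z$ and all $T$'' and build the whole diagonal argument on it. For the very step you use, $T=I_\A$ and $Z=I_\A$, the two summands are $2^{n-2}(A_{11}+A_{11}^*)$ and $2^{n-2}(B_{11}+B_{11}^*)$, both lying in $\A_{11}$, so the required additivity is exactly the diagonal-block additivity you are in the middle of proving; the same obstruction arises for every $Z$ whose $(1,1)$-component is nonzero. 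Hence the conclusions ``$D^*=-D$'', ``$D$ central'', ``$[P_1D,Z]=0$ for all $Z$'' are all unavailable, and the skew-central route collapses. The paper's Lemma~\ref{lema6} shows how to sidestep this: first test with $q_{n*}(P_j,\dots,P_j,H)$, $j\neq i$, where both summands $q_{n*}(P_j,\dots,P_j,A_{ii})$ and $q_{n*}(P_j,\dots,P_j,B_{ii})$ are \emph{identically zero} (so only $\varphi(0)=0$ is needed), which forces $H\in\A_{ii}$; then test only against $Z=C_{ij}\in\A_{ij}$, where both summands $2^{n-2}A_{ii}C_{ij}$ and $2^{n-2}B_{ii}C_{ij}$ lie in the off-diagonal block $\A_{ij}$, so that the within-block Lemma~\ref{lema5} applies; injectivity then gives $(H_{ii}-A_{ii}-B_{ii})C_{ij}=0$ for all $C_{ij}\in\A_{ij}$, hence $(H_{ii}-A_{ii}-B_{ii})\A P_j=\{0\}$, and $(\spadesuit)$ finishes. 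No centrality argument is needed or possible.

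Two smaller gaps in the plan. First, your opening step ($A\in\A_{12}$, $B\in\A_{21}$, ``no circularity arises'') already requires additivity on pairs in $\A_{11}\times\A_{22}$: the relevant summands there are multiples of $C_{12}A_{12}^*\in\A_{11}$ and $B_{21}C_{12}\in\A_{22}$, so the paper's Lemma~\ref{lema1} must be proved before it (this is harmless --- in its tests one of the two summands vanishes --- but it has to be in the chain); likewise the passage to the full four-block decomposition needs the intermediate three-term statements of Lemma~\ref{lema3}. Second, the theorem asserts $*$-additivity, and your proposal stops at additivity: you must still check $\varphi(A^*)=\varphi(A)^*$, which the paper deduces from $q_{n*}(I_\A,\dots,I_\A,A,I_\A)=2^{n-2}(A+A^*)$ combined with the additivity just established.
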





The following lemmas have the same hypotheses as the Theorem \ref{mainthm1} and we need them to prove the $*$-additivity of $\varphi$. 

\begin{lemma}\label{Claim1}  $\varphi(0) = 0$.
\end{lemma}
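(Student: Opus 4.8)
The plan is to use surjectivity to single out the preimage of $0$ and then show it must itself be $0$. Since $\varphi$ is bijective, let $A_0 \in \A$ be the unique element with $\varphi(A_0)=0$; it suffices to prove $A_0=0$.

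First I would substitute $A_0$ into the penultimate slot of the identity-version of the hypothesis. For every $B \in \A$,
$$
\varphi\big(q_{n*}(I_\A,\ldots,I_\A,A_0,B)\big) = q_{n*}\big(\varphi(I_\A),\ldots,\varphi(I_\A),\varphi(A_0),\varphi(B)\big) = q_{n*}\big(\varphi(I_\A),\ldots,\varphi(I_\A),0,\varphi(B)\big).
$$
The key observation is that a single $0$ entry annihilates $q_{n*}$, independently of the repeated entries: since $\{C,0\}_*=0$ for every $C$ and $q_{1*}(0)=0$, one gets $q_{(n-1)*}(\varphi(I_\A),\ldots,\varphi(I_\A),0)=0$, whence the outer bracket yields $q_{n*}(\varphi(I_\A),\ldots,\varphi(I_\A),0,\varphi(B))=\{0,\varphi(B)\}_*=0$. (Equivalently, using $\varphi(I_\A)=I_{\A'}$, one has $q_{n*}(I_\A,\ldots,I_\A,A,B)=2^{\,n-2}(AB+BA^*)$, which vanishes at $A=0$.) Hence $\varphi\big(q_{n*}(I_\A,\ldots,I_\A,A_0,B)\big)=0$ for all $B\in\A$.

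Next I would invoke injectivity. Because $A_0$ is the unique preimage of $0$, the display above forces
$$
q_{n*}(I_\A,\ldots,I_\A,A_0,B)=A_0 \qquad \text{for every } B\in\A.
$$
The freedom in $B$ closes the argument: specializing to $B=0$ makes the outermost bracket vanish (again $\{C,0\}_*=0$), so $q_{n*}(I_\A,\ldots,I_\A,A_0,0)=0$, and comparison with the previous line gives $A_0=0$. Therefore $\varphi(0)=\varphi(A_0)=0$.

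I do not anticipate a genuine obstacle in this lemma: neither the projection-version of the hypothesis nor condition $\left(\spadesuit\right)$ is required, and the only delicate point is the bookkeeping for the conjugate-linear bracket $\{\cdot,\cdot\}_*$ that makes a lone zero entry collapse $q_{n*}$. The substantive work — establishing additivity on and across the corners $\A_{ij}$ — is deferred to the lemmas that follow.
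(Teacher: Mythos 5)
Your proof is correct, but it follows a different route from the paper's. The paper also begins with a preimage $X$ of $0$, but then splits into cases: for $n \geq 3$ it feeds $(P_1,\dots,P_1,P_2,X)$ into the \emph{projection}-version of the hypothesis, so the right-hand side vanishes because $\varphi(X)=0$ sits in the last slot, while the left-hand side equals $\varphi(0)$ because $q_{(n-1)*}(P_1,\dots,P_1,P_2)=0$ by orthogonality of the projections ($P_1P_2=P_2P_1=0$); for $n=2$ it instead uses unitality to get $\varphi(0)=q_{2*}(I_{\A'},\varphi(0))=2\varphi(0)$, hence $\varphi(0)=0$. Notably, the paper never invokes injectivity in this lemma. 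Your argument runs the other way: it uses injectivity (uniqueness of the preimage $A_0$) together with the free parameter $B$, specialized to $B=0$, to force $A_0=0$; in exchange it is uniform in $n$, uses only the identity-version of the hypothesis, and needs neither the projections nor, in its main line, unitality. Both proofs rest on the same key computation --- a zero in any slot annihilates $q_{n*}$, since $\{C,0\}_*=0=\{0,C\}_*$ --- so the difference is one of deduction structure rather than substance: you trade the paper's case split on $n$ and its use of orthogonal projections for an appeal to injectivity.
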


\begin{proof}
Since $\varphi$ is surjective, there exists $X \in \A$ such that $\varphi(X) = 0$. Firstly, consider that $n \geq 3$. Then,
$$
0 = q_{n*}(\varphi(P_1),...,\varphi(P_1),\varphi(P_2),\varphi(X)) = \varphi(q_{n*}(P_1,...,P_1,P_2,X)) = \varphi(0).
$$
Now, if $n=2$ we have
$$
\varphi(0) = \varphi(q_{2*}(I_\A,0)) = q_{2*}(\varphi(I_\A),\varphi(0)) = q_{2*}(I_{\A'},\varphi(0)) = 2\varphi(0).
$$
Therefore, $\varphi(0) = 0$.
\end{proof}

\begin{lemma}\label{lema1} For any $A_{11} \in \A_{11}$ and $B_{22} \in \A_{22}$, we have 
$$\varphi(A_{11} + B_{22}) = \varphi(A_{11}) + \varphi(B_{22}).$$
\end{lemma}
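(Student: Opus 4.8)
The plan is to produce a preimage by surjectivity and then pin it down through its Peirce components. Since $\varphi$ is bijective, I would fix $T \in \A$ with $\varphi(T) = \varphi(A_{11}) + \varphi(B_{22})$ and write $T = T_{11} + T_{12} + T_{21} + T_{22}$, where $T_{ij} = P_i T P_j \in \A_{ij}$. By injectivity of $\varphi$, the desired equality $\varphi(A_{11}+B_{22}) = \varphi(T)$ is equivalent to the purely algebraic statement $T = A_{11} + B_{22}$, that is, to $T_{11} = A_{11}$, $T_{22} = B_{22}$ and $T_{12} = T_{21} = 0$. Establishing these four component identities is the goal.

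First I would record the computation that collapses the projection slots. As each $P_i$ is a self-adjoint idempotent, a short induction gives $q_{k*}(P_i, \ldots, P_i) = 2^{k-1} P_i$, so that, with the first $n-1$ entries equal to $P_i$,
\[ q_{n*}(P_i, \ldots, P_i, W) = \{\, 2^{n-2} P_i, W \,\}_* = 2^{n-2}\bigl(P_i W + W P_i\bigr) \qquad (W \in \A). \]
Specialising $W$ to $A_{11}$, $B_{22}$ and $T$ and using $P_1 A_{11} = A_{11} P_1 = A_{11}$, $P_1 B_{22} = B_{22} P_1 = 0$, I get $q_{n*}(P_1, \ldots, P_1, A_{11}) = 2^{n-1} A_{11}$, $q_{n*}(P_1, \ldots, P_1, B_{22}) = 0$ and $q_{n*}(P_1, \ldots, P_1, T) = 2^{n-2}(2T_{11} + T_{12} + T_{21})$. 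Now I would invoke Claim \ref{Claim2} with the scalar $S = P_1$ in its first $n-2$ slots and $P_1$ again in the penultimate slot, splitting the last slot via the relation $\varphi(T) = \varphi(A_{11}) + \varphi(B_{22})$ (which is precisely the hypothesis of Claim \ref{Claim2}), to obtain
\[ \varphi\!\left(q_{n*}(P_1, \ldots, P_1, T)\right) = \varphi\!\left(q_{n*}(P_1, \ldots, P_1, A_{11})\right) + \varphi\!\left(q_{n*}(P_1, \ldots, P_1, B_{22})\right). \]
Substituting the three computed values and using $\varphi(0) = 0$ (Lemma \ref{Claim1}), injectivity forces $2T_{11} + T_{12} + T_{21} = 2A_{11}$; comparing Peirce components (the right-hand side lying in $\A_{11}$) yields $T_{11} = A_{11}$ and $T_{12} = T_{21} = 0$. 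Running the identical step with $P_2$ in place of $P_1$ gives $T_{12} + T_{21} + 2T_{22} = 2B_{22}$, whence $T_{22} = B_{22}$. Assembling the components, $T = A_{11} + B_{22}$, which is the claim.

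I do not expect a serious obstacle here: the argument uses only bijectivity and $\varphi(0)=0$, and not the separating hypothesis $(\spadesuit)$. The one point requiring care is verifying that Claim \ref{Claim2} genuinely applies, since all of its scalar slots together with its $Z$-slot must be occupied by the same projection; in particular this forces me to watch the degenerate case $n = 2$, where there are no pure scalar slots and the splitting takes place directly in $q_{2*}(P_i, T)$. Beyond that, the remaining work is just the routine bookkeeping of the four Peirce components.
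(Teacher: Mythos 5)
Your proposal is correct and follows essentially the same route as the paper's own proof: take a preimage $T$ of $\varphi(A_{11})+\varphi(B_{22})$, apply Claim \ref{Claim2} with all projection slots filled by $P_i$, use $\varphi(0)=0$ and injectivity, and compare Peirce components of $2T_{ii}+T_{12}+T_{21}$ with $2A_{11}$ (resp.\ $2B_{22}$). Your explicit computation $q_{n*}(P_i,\ldots,P_i,W)=2^{n-2}(P_iW+WP_i)$ and the remark about the $n=2$ case only make explicit what the paper leaves implicit.
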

\begin{proof}
Since $\varphi$ is surjective, given $\varphi(A_{11})+\varphi(B_{22}) \in \A'$ there exists $T \in \A$ such that 
$\varphi(T) = \varphi(A_{11})+\varphi(B_{22})$, with $T=T_{11}+T_{12}+T_{21}+T_{22}$. Now, by Claim \ref{Claim2}
$$
\varphi(q_{n*}(P_i,...,P_i,T)) = \varphi(q_{n*}(P_i,...,P_i,A_{11})) + \varphi(q_{n*}(P_i,...,P_i,B_{22})),
$$
with $i=1,2$. It follows that
$$
\varphi(2^{n-2}(P_iT + TP_i)) = \varphi(2^{n-2}(P_iA_{11} + A_{11}P_i)) + \varphi(2^{n-2}(P_iB_{22} + B_{22}P_i)).
$$
Using the injectivity of $\varphi$ we obtain
$$
2^{n-2}(2T_{11} + T_{12} + T_{21}) = 2^{n-2}(2A_{11})
$$
and
$$
2^{n-2}(2T_{22} + T_{12} + T_{21}) = 2^{n-2}(2B_{22}).
$$
Then $T_{11}=A_{11}$, $T_{22}=B_{22}$ and $T_{12}=T_{21}=0$.
\end{proof}

\begin{lemma}\label{lema2}
For any $A_{12} \in \A_{12}$ and $B_{21} \in \A_{21}$, we have $\varphi(A_{12} + B_{21}) = \varphi(A_{12}) + \varphi(B_{21})$.
\end{lemma}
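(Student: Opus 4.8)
The plan is to mimic the corner-by-corner strategy of Lemma \ref{lema1}: use surjectivity to pick $S \in \A$ with $\varphi(S) = \varphi(A_{12}) + \varphi(B_{21})$, write $S = S_{11} + S_{12} + S_{21} + S_{22}$ with $S_{ij} \in \A_{ij}$, and show $S = A_{12} + B_{21}$, i.e. $S_{12} = A_{12}$, $S_{21} = B_{21}$ and $S_{11} = S_{22} = 0$. The off-diagonal corners come out much as before, but the off-diagonal location of $A_{12}$ and $B_{21}$ forces a genuinely new idea for the diagonal corners.

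For the off-diagonal corners I would apply Claim \ref{Claim2} with $H = S$, $X = A_{12}$, $Y = B_{21}$, in the second-to-last-slot identity, using the probe $T = P_1$ and $Z = P_2$. For $n \geq 3$ one has $q_{n*}(P_1, \ldots, P_1, W, P_2) = 2^{n-3}\left[(P_1 W + W P_1)P_2 + P_2(W^* P_1 + P_1 W^*)\right]$, and a direct evaluation shows the $B_{21}$-term vanishes (it lies in the corner killed by this operator) while the $A_{12}$-term gives $2^{n-3}(A_{12} + A_{12}^*)$. Using $\varphi(0) = 0$ (Lemma \ref{Claim1}) and injectivity I obtain $S_{12} + S_{12}^* = A_{12} + A_{12}^*$, and since $\A = \A_{11} \oplus \A_{12} \oplus \A_{21} \oplus \A_{22}$ with $*(\A_{12}) \subseteq \A_{21}$ (Claim \ref{obs}), comparing $\A_{12}$-components yields $S_{12} = A_{12}$. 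The mirror probe $T = P_2$, $Z = P_1$ gives $S_{21} = B_{21}$.

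The diagonal corners are the crux, and this is where hypothesis $(\spadesuit)$ must enter. The obstruction is that every probe assembled from the self-adjoint elements $I_\A, P_1, P_2$ is invariant under $W \mapsto W^*$ on the diagonal blocks, so it can only ever see the Hermitian part $S_{11} + S_{11}^*$ of a corner, never $S_{11}$ alone; indeed a probe with $Z = P_1$ only delivers $S_{11} + S_{11}^* = 0$. To break this symmetry I would feed an \emph{arbitrary} off-diagonal element into the free slot: apply Claim \ref{Claim2} with $T = P_1$ and $Z = C_{12}$ for arbitrary $C_{12} \in \A_{12}$. This choice does two things at once. First, the two comparison terms $q_{n*}(P_1, \ldots, P_1, A_{12}, C_{12})$ and $q_{n*}(P_1, \ldots, P_1, B_{21}, C_{12})$ land respectively in $\A_{11}$ and $\A_{22}$, so Lemma \ref{lema1} merges the right-hand side into a single value of $\varphi$. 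Second, after cancelling the terms already identified via $S_{12} = A_{12}$ and $S_{21} = B_{21}$, injectivity leaves the clean relation $S_{11} C_{12} = 0$ for all $C_{12} \in \A_{12}$, equivalently $S_{11}\A P_2 = \{0\}$. Now $(\spadesuit)$ forces $S_{11} = 0$, and the symmetric probe $T = P_2$, $Z = C_{21} \in \A_{21}$ gives $S_{22}\A P_1 = \{0\}$, hence $S_{22} = 0$. Therefore $S = A_{12} + B_{21}$, as required.

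The main obstacle is thus entirely the diagonal: realizing that projection-probes are blind to the anti-Hermitian part of the corners, and that the fix is an off-diagonal probe whose output is routed into $\A_{11} \oplus \A_{22}$ (so Lemma \ref{lema1} applies) and which reduces the question to the nondegeneracy condition $(\spadesuit)$. Finally I would record that $n = 2$ runs along the same lines using the full multiplicativity $\varphi(X \bullet Y) = \varphi(X) \bullet \varphi(Y)$ in place of the projection-probe identities; there the same two comparisons (against $P_2$ and against a general $C_{12}$) reproduce all four relations, the only difference being a few extra $\A_{22}$-terms that are absorbed by the same Lemma \ref{lema1} step.
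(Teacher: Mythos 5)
Your treatment of $n\geq 3$ is correct, and it is a genuine (and valid) rearrangement of the paper's argument. The paper disposes of the diagonal corners \emph{first}, and without $(\spadesuit)$, by the probe $q_{n*}(P_1,\ldots,P_1,\tfrac{1}{2^{n-2}}P_1,T)$ with $T$ in the \emph{last} slot: since $q_{n*}(P_1,\ldots,P_1,\tfrac{1}{2^{n-2}}P_1,W)=P_1W+WP_1$, which fixes $A_{12}$ and $B_{21}$, the right-hand side of Claim \ref{Claim2} collapses back to $\varphi(A_{12})+\varphi(B_{21})=\varphi(T)$, and injectivity gives $P_1T+TP_1=T$, i.e. $T_{11}=T_{22}=0$. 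The paper then gets \emph{both} off-diagonal corners from the single relation $T_{21}C_{12}+C_{12}T_{12}^*=B_{21}C_{12}+C_{12}A_{12}^*$ via $(\spadesuit)$; you instead get the off-diagonal corners by component comparison and spend $(\spadesuit)$ on the diagonal. For $n\geq 3$ both routes check out completely.

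The gap is your last paragraph on $n=2$, and it is not cosmetic. Your guiding claim --- that any probe built from $I_\A,P_1,P_2$ can only see the Hermitian part of a diagonal corner --- is false: in $\{A,B\}_*=AB+BA^*$ the involution falls on the \emph{first} argument, so a probe placing $S$ in the \emph{last} slot, e.g. $q_{2*}(P_1,S)=P_1S+SP_1$, sees $S_{11}$ itself; this is precisely the paper's probe, and it is why the paper's proof is uniform in $n\geq 2$. Having discarded that option, your $n=2$ argument uses only probes with $S$ in the first slot, and these are genuinely insufficient: they yield $S_{12}=A_{12}$, $S_{21}=B_{21}$, $S_{11}+S_{11}^*=S_{22}+S_{22}^*=0$, and, from the $C_{12}$-probe, $S_{11}C_{12}+C_{12}S_{22}^*=0$. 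The unwanted term $C_{12}S_{22}^*$ lies in $\A_{12}$, the \emph{same} corner as $S_{11}C_{12}$ --- it is not an ``extra $\A_{22}$-term absorbed by Lemma \ref{lema1}'' --- so you cannot isolate $S_{11}C_{12}=0$, and $(\spadesuit)$ never gets to act. (For $n\geq3$ this term was filtered out because the inner bracket $q_{(n-1)*}(P_1,\ldots,P_1,S)=2^{n-3}(P_1S+SP_1)$ has no $\A_{22}$-component; at $n=2$ there is no inner bracket.) Concretely, $S_{11}=iP_1$, $S_{22}=iP_2$, i.e. $S=A_{12}+B_{21}+iI_\A$, satisfies every relation your probes produce, so your probe set cannot pin down $S$ when $n=2$. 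The repair is exactly the paper's last-slot probe above, which settles the diagonal corners for $n=2$ in one line; after that your remaining steps go through.
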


\begin{proof}
Since $\varphi$ is surjective, given $\varphi(A_{12})+\varphi(B_{21}) \in \A'$ there exists $T \in \A$ such that 
$\varphi(T) = \varphi(A_{12})+\varphi(B_{21})$, with $T=T_{11}+T_{12}+T_{21}+T_{22}$. Now, by Claim \ref{Claim2}
$$
\begin{aligned}
\varphi(q_{n*}(P_1,...,P_1,\frac{1}{2^{n-2}}P_1,T)) &= \varphi(q_{n*}(P_1,...,P_1,\frac{1}{2^{n-2}}P_1,A_{12})) \\&+ \varphi(q_{n*}(P_1,...,P_1,\frac{1}{2^{n-2}}P_1,B_{21})) \\
                               &= \varphi(P_1A_{12} + A_{12}P_1) + \varphi(P_1B_{21} + B_{21}P_1) \\
                               &= \varphi(A_{12}) + \varphi(B_{21}) = \varphi(T).
\end{aligned}
$$
Since $\varphi$ is injective,
$$
P_1T + TP_1 = T,
$$
that is,
$$
2T_{11} + T_{12} + T_{21} = T_{11}+T_{12}+T_{21}+T_{22}.
$$
Then $T_{11} = T_{22} = 0$.
Now, observe that, for $C_{12} \in \A_{12}$, $q_{n*}(P_1,...,P_1,A_{12},C_{12}) \in \A_{11}$ and $q_{n*}(P_1,...,P_1,B_{21},C_{12}) \in \A_{22}$.
Then, by Claim \ref{Claim2} and Lemma \ref{lema1}, we obtain
$$
\begin{aligned}
\varphi(q_{n*}(P_1,...,P_1,T,C_{12})) &= \varphi(q_{n*}(P_1,...,P_1,A_{12},C_{12})) + \varphi(q_{n*}(P_1,...,P_1,B_{21},C_{12})) \\
                                      &= \varphi(q_{n*}(P_1,...,P_1,A_{12},C_{12})+q_{n*}(P_1,...,P_1,B_{21},C_{12})).
\end{aligned}
$$
By injectivity of $\varphi$ we have
$$
q_{n*}(P_1,...,P_1,T,C_{12})=q_{n*}(P_1,...,P_1,A_{12},C_{12})+q_{n*}(P_1,...,P_1,B_{21},C_{12}),
$$
that is,
$$
T_{21}C_{12}+C_{12}T_{12}^* = B_{21}C_{12}+C_{12}A_{12}^*.
$$
Therefore,
$$
(T_{21} - B_{21})C_{12} = 0 \mbox{\,\, and \,\,} C_{12}(T_{12}^*-A_{12}^*) = 0.
$$
Finally, by $\left(\spadesuit\right)$ we conclude that $T_{12}=A_{12}$ and $T_{21}=B_{21}$.
\end{proof}

\begin{lemma}\label{lema3}
For any $A_{11} \in \A_{11}$, $B_{12} \in \A_{12}$, $C_{21} \in \A_{21}$ and $D_{22} \in \A_{22}$ we have 
$$\varphi(A_{11} + B_{12} + C_{21}) = \varphi(A_{11}) + \varphi(B_{12})+ \varphi(C_{21})$$
and
$$\varphi(B_{12} + C_{21} + D_{22}) = \varphi(B_{12})+ \varphi(C_{21}) + \varphi(D_{22}).$$
\end{lemma}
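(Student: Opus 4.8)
The plan is to follow the template of Lemma \ref{lema2}: use surjectivity to produce a candidate $T$ with $\varphi(T)=\varphi(A_{11})+\varphi(B_{12})+\varphi(C_{21})$, write $T=T_{11}+T_{12}+T_{21}+T_{22}$, and prove $T=A_{11}+B_{12}+C_{21}$; the second identity will then follow by interchanging $P_1$ and $P_2$ (equivalently $\A_{11}\leftrightarrow\A_{22}$, $\A_{12}\leftrightarrow\A_{21}$), since the hypotheses are symmetric in the two projections. The first move is to invoke Lemma \ref{lema2} to rewrite the defining relation as $\varphi(T)=\varphi(A_{11})+\varphi(B_{12}+C_{21})$, i.e. a genuine two–term decomposition $\varphi(T)=\varphi(X)+\varphi(Y)$ with $X=A_{11}$ and $Y=B_{12}+C_{21}$, so that Claim \ref{Claim2} can be applied.

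Next I would pin down $T_{12},T_{21},T_{22}$ exactly as in the off-diagonal argument. Applying Claim \ref{Claim2} with filler $P_2$ and the penultimate slot $Z=\frac{1}{2^{n-2}}P_2$, one has $q_{n*}(P_2,\dots,P_2,\tfrac{1}{2^{n-2}}P_2,\,\cdot\,)=\{P_2,\cdot\}_{*}=P_2(\cdot)+(\cdot)P_2$. Since $\{P_2,T\}_{*}=2T_{22}+T_{12}+T_{21}$, $\{P_2,A_{11}\}_{*}=0$ and $\{P_2,B_{12}+C_{21}\}_{*}=B_{12}+C_{21}$, the injectivity of $\varphi$ together with $\varphi(0)=0$ (Lemma \ref{Claim1}) forces $2T_{22}+T_{12}+T_{21}=B_{12}+C_{21}$. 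Comparing the four corners gives $T_{22}=0$, $T_{12}=B_{12}$, $T_{21}=C_{21}$, so that $T=T_{11}+B_{12}+C_{21}$.

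It remains to show $T_{11}=A_{11}$, and this is where the real work lies. For an arbitrary $C_{12}\in\A_{12}$ I would apply Claim \ref{Claim2} to $q_{n*}(P_1,\dots,P_1,T,C_{12})$, splitting $T$ into its three summands. After collapsing the fillers, the $B_{12}$- and $C_{21}$-contributions land in $\A_{11}$ and $\A_{22}$ respectively (they are $2^{n-3}C_{12}B_{12}^{*}$ and $2^{n-3}C_{21}C_{12}$), so by Lemma \ref{lema1} they merge into a single $\varphi$; meanwhile the $A_{11}$-contribution sits alone in $\A_{12}$, equal to $2^{n-2}A_{11}C_{12}$. Writing $G=2^{n-3}(C_{12}B_{12}^{*}+C_{21}C_{12})\in\A_{11}\oplus\A_{22}$, $E=2^{n-2}T_{11}C_{12}$ and $E'=2^{n-2}A_{11}C_{12}\in\A_{12}$, the comparison of the two sides reduces to $\varphi(E+G)=\varphi(E')+\varphi(G)$. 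If one can strip off the diagonal block $G$ and conclude $E=E'$, then $T_{11}C_{12}=A_{11}C_{12}$ for every $C_{12}$, i.e. $(T_{11}-A_{11})\A P_2=\{0\}$, and $\left(\spadesuit\right)$ finishes the argument with $T_{11}=A_{11}$.

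I expect this last cancellation to be the main obstacle. The two additive relations already available, Lemma \ref{lema1} and Lemma \ref{lema2}, link the corners in the complementary patterns $\{11,22\}$ and $\{12,21\}$, and neither of them connects a diagonal corner to an off-diagonal one; consequently any single test element separates the $A_{11}$-contribution (lying in one off-diagonal corner) from the combined $B_{12},C_{21}$-contribution (lying in the opposite, diagonal block), and these two $\varphi$-values cannot be fused by Lemmas \ref{lema1}--\ref{lema2} alone. Bridging this diagonal/off-diagonal gap is precisely the point at which the nondegeneracy hypothesis $\left(\spadesuit\right)$ must enter, and the delicate part will be to choose the test products (and possibly combine the relation obtained from $q_{n*}(P_1,\dots,P_1,T,C_{12})$ with the one from $q_{n*}(P_1,\dots,P_1,C_{12},T)$) so that the surviving discrepancy between $E$ and $E'$ is isolated in a single corner on which $\left(\spadesuit\right)$ applies.
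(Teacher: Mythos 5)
Your setup and first step are correct and match the paper: surjectivity gives $T$ with $\varphi(T)=\varphi(A_{11})+\varphi(B_{12})+\varphi(C_{21})$, and your use of Lemma \ref{lema2} to merge this into the two-term decomposition $\varphi(T)=\varphi(A_{11})+\varphi(B_{12}+C_{21})$, followed by Claim \ref{Claim2} with filler $P_2$, does yield $2T_{22}+T_{12}+T_{21}=B_{12}+C_{21}$ and hence $T_{22}=0$, $T_{12}=B_{12}$, $T_{21}=C_{21}$. But the decisive step $T_{11}=A_{11}$ is not proved, and you say so yourself. The obstacle you run into is real and is not a matter of choosing better test elements: from $\varphi(E+G)=\varphi(E')+\varphi(G)$ with $E,E'\in\A_{12}$ and $G\in\A_{11}\oplus\A_{22}$, injectivity can only be used after rewriting $\varphi(E')+\varphi(G)$ as a single value $\varphi(E'+G)$, and that rewriting is exactly an instance of the mixed diagonal/off-diagonal additivity that Lemma \ref{lema3} is supposed to establish — the argument is circular. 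Moreover $(\spadesuit)$ cannot enter at all until an honest algebraic identity has been extracted from under $\varphi$, which is precisely what is blocked; testing with elements of $\A_{21}$ instead of $\A_{12}$, or with $T$ in the penultimate slot, produces the same mixed-corner structure.

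The idea you are missing is to abandon arbitrary test elements and put the \emph{fixed} element $P_1-P_2$ in the penultimate slot, with fillers $I_\A$ (this is covered by the first displayed hypothesis of Theorem \ref{mainthm1}, i.e.\ Claim \ref{Claim2} with $T=I_\A$ and $Z=P_1-P_2$). Since $P_1-P_2$ is self-adjoint and anticommutes with every off-diagonal element, one has $q_{n*}(I_\A,\dots,I_\A,P_1-P_2,B_{12})=q_{n*}(I_\A,\dots,I_\A,P_1-P_2,C_{21})=0$, while $q_{n*}(I_\A,\dots,I_\A,P_1-P_2,X)=2^{n-1}(X_{11}-X_{22})$ for any $X$. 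So Claim \ref{Claim2} (after the same merging via Lemma \ref{lema2} you already used, plus $\varphi(0)=0$) gives $\varphi(q_{n*}(I_\A,\dots,I_\A,P_1-P_2,T))=\varphi(q_{n*}(I_\A,\dots,I_\A,P_1-P_2,A_{11}))$, and injectivity alone yields $2^{n-1}(T_{11}-T_{22})=2^{n-1}A_{11}$, hence $T_{11}=A_{11}$ since $T_{22}=0$. Note that this also refutes your structural prediction: the paper's proof of this lemma uses no nondegeneracy whatsoever — $(\spadesuit)$ is needed in Lemmas \ref{lema2} and \ref{lema6}, but not here. (Your symmetry remark for the second identity is fine.)
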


\begin{proof}
Since $\varphi$ is surjective, given $\varphi(A_{11})+\varphi(B_{12})+\varphi(C_{21}) \in \A'$ there exists $T \in \A$ such that 
$\varphi(T) = \varphi(A_{11})+\varphi(B_{12})+\varphi(C_{21})$, with $T=T_{11}+T_{12}+T_{21}+T_{22}$. Now, observing that
$q_{n*}(P_2,...,P_2,A_{11})=0$ and using Claim \ref{Claim2} and Lemma \ref{lema2}, we obtain
$$
\begin{aligned}
\varphi(q_{n*}(P_2,...,P_2,T)) &= \varphi(q_{n*}(P_2,...,P_2,A_{11})) + \varphi(q_{n*}(P_2,...,P_2,B_{12})) \\&+ \varphi(q_{n*}(P_2,...,P_2,C_{21})) \\
                               &= \varphi(q_{n*}(P_2,...,P_2,B_{12}) + q_{n*}(P_2,...,P_2,C_{21})).
\end{aligned}
$$
By injectivity of $\varphi$ we have
$$
q_{n*}(P_2,...,P_2,T)=q_{n*}(P_2,...,P_2,B_{12}) + q_{n*}(P_2,...,P_2,C_{21}),
$$
that is,
$$
2T_{22} + T_{12} + T_{21} = B_{12} + C_{21}.
$$
Therefore, $T_{22}=0$, $T_{12}=B_{12}$ and $T_{21}=C_{21}$.
Again, observing that $q_{n*}(I_\A,...,I_\A,P_1-P_2,B_{12})=q_{n*}(I_\A,...,I_\A,P_1-P_2,C_{21})=0$ and using 
Claim \ref{Claim2}, we obtain
$$
\begin{aligned}
\varphi(q_{n*}(I_\A,...,I_\A,P_1-P_2,T)) &= \varphi(q_{n*}(I_\A,...,I_\A,P_1-P_2,A_{11}))\\&+\varphi(q_{n*}(I_\A,...,I_\A,P_1-P_2,B_{12})) \\
                                         &+ \varphi(q_{n*}(I_\A,...,I_\A,P_1-P_2,C_{21})) \\
                                         &= \varphi(q_{n*}(I_\A,...,I_\A,P_1-P_2,A_{11})).
\end{aligned}
$$
By injectivity of $\varphi$ we have
$$
q_{n*}(I_\A,...,I_\A,P_1-P_2,T) = q_{n*}(I_\A,...,I_\A,P_1-P_2,A_{11}),
$$
that is,
$$
2T_{11} - 2T_{22} = 2A_{11}.
$$
Therefore, $T_{11}=A_{11}$.

The other identity we obtain in a similar way.
\end{proof}

\begin{lemma}\label{lema4}
For any $A_{11} \in \A_{11}$, $B_{12} \in \A_{12}$, $C_{21} \in \A_{21}$ and $D_{22} \in \A_{22}$ we have 
$$\varphi(A_{11} + B_{12} + C_{21} + D_{22}) = \varphi(A_{11}) + \varphi(B_{12}) + \varphi(C_{21}) + \varphi(D_{22}).$$
\end{lemma}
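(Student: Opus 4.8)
The plan is to use surjectivity to introduce a single preimage $T$ and then pin down its four corners $T_{ij}$ one block at a time, in the spirit of Lemmas \ref{lema1}--\ref{lema3}. Since $\varphi$ is surjective, choose $T = T_{11}+T_{12}+T_{21}+T_{22} \in \A$ with
$$\varphi(T) = \varphi(A_{11}) + \varphi(B_{12}) + \varphi(C_{21}) + \varphi(D_{22}).$$
The goal is to prove $T = A_{11}+B_{12}+C_{21}+D_{22}$, after which injectivity closes the lemma. The one genuine nuisance is that Claim \ref{Claim2} only splits a value of $\varphi$ into \emph{two} summands, whereas our right-hand side has four. I would circumvent this by first regrouping via the three-term identity already established in Lemma \ref{lema3}, writing $\varphi(T) = \varphi(A_{11}) + \varphi(B_{12}+C_{21}+D_{22})$; this is the only reason Lemma \ref{lema3} must precede this one.

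First I would test against $P_2$. Applying Claim \ref{Claim2} (repeated entry $P_2$) to the two-term split of $\varphi(T)$ gives
$$\varphi(q_{n*}(P_2,\dots,P_2,T)) = \varphi(q_{n*}(P_2,\dots,P_2,A_{11})) + \varphi(q_{n*}(P_2,\dots,P_2,B_{12}+C_{21}+D_{22})).$$
A direct computation yields $q_{n*}(P_2,\dots,P_2,X) = 2^{n-2}(2X_{22}+X_{12}+X_{21})$, so the $A_{11}$-term vanishes (and $\varphi(0)=0$ by Lemma \ref{Claim1}); injectivity then leaves $q_{n*}(P_2,\dots,P_2,T) = q_{n*}(P_2,\dots,P_2,B_{12}+C_{21}+D_{22})$. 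Comparing the $(2,2)$, $(1,2)$ and $(2,1)$ blocks gives $T_{22}=D_{22}$, $T_{12}=B_{12}$ and $T_{21}=C_{21}$. The $(1,1)$ block is invisible to this test, so it remains to identify $T_{11}$.

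For the diagonal I would test against $I_\A,\dots,I_\A,P_1-P_2$. Splitting the last slot via Claim \ref{Claim2} (repeated entry $I_\A$) with the same two-term decomposition of $\varphi(T)$ gives
$$\varphi(q_{n*}(I_\A,\dots,I_\A,P_1-P_2,T)) = \varphi(q_{n*}(I_\A,\dots,I_\A,P_1-P_2,A_{11})) + \varphi(q_{n*}(I_\A,\dots,I_\A,P_1-P_2,B_{12}+C_{21}+D_{22})).$$
Here $q_{n*}(I_\A,\dots,I_\A,P_1-P_2,X) = 2^{n-1}(X_{11}-X_{22})$, so the two right-hand terms reduce to $\varphi(2^{n-1}A_{11})$ and $\varphi(-2^{n-1}D_{22})$; as these lie in $\A_{11}$ and $\A_{22}$, Lemma \ref{lema1} recombines them into $\varphi(2^{n-1}(A_{11}-D_{22}))$. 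Injectivity then forces $T_{11}-T_{22} = A_{11}-D_{22}$, and combined with $T_{22}=D_{22}$ from the previous step we obtain $T_{11}=A_{11}$. Hence $T = A_{11}+B_{12}+C_{21}+D_{22}$, as required. The main obstacle throughout is precisely the two-versus-four mismatch in Claim \ref{Claim2}; once that is resolved by the regrouping above, everything else is routine bookkeeping with the explicit block formulas for $q_{n*}$ and the choice of test elements ($P_2$ to read off three blocks, then $I_\A,\dots,I_\A,P_1-P_2$ to isolate the diagonal).
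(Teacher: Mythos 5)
Your proposal is correct and follows essentially the same strategy as the paper's proof: introduce a preimage $T$ by surjectivity, regroup the four summands into two via Lemma \ref{lema3} so that the two-term Claim \ref{Claim2} applies, and then pin down the blocks of $T$ by injectivity and block comparison. The only difference is cosmetic: the paper identifies $T_{11}$, $T_{12}$, $T_{21}$ with a $q_{n*}(P_1,\dots,P_1,\cdot)$ test (using the other grouping supplied by Lemma \ref{lema3}) and then gets $T_{22}$ from the symmetric $P_2$ test, whereas you run the $P_2$ test first and then isolate $T_{11}$ by reusing the $q_{n*}(I_\A,\dots,I_\A,P_1-P_2,\cdot)$ test from the proof of Lemma \ref{lema3} together with Lemma \ref{lema1}; both variants are equally valid.
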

\begin{proof}
Since $\varphi$ is surjective, given $\varphi(A_{11})+\varphi(B_{12})+\varphi(C_{21})+\varphi(D_{22}) \in \A'$ there exists $T \in \A$ such that 
$\varphi(T) = \varphi(A_{11})+\varphi(B_{12})+\varphi(C_{21})+\varphi(D_{22})$, with $T=T_{11}+T_{12}+T_{21}+T_{22}$. Now, observing that
$q_{n*}(P_1,...,P_1,D_{22})=0$ and using Claim \ref{Claim2} and Lemma \ref{lema3}, we obtain
$$
\begin{aligned}
\varphi(q_{n*}(P_1,...,P_1,T)) &= \varphi(q_{n*}(P_1,...,P_1,A_{11})) + \varphi(q_{n*}(P_1,...,P_1,B_{12})) \\
                               &+ \varphi(q_{n*}(P_1,...,P_1,C_{21})) + \varphi(q_{n*}(P_1,...,P_1,D_{22})) \\
                               &= \varphi(q_{n*}(P_1,...,P_1,A_{11})) + \varphi(q_{n*}(P_1,...,P_1,B_{12})) \\&+ \varphi(q_{n*}(P_1,...,P_1,C_{21})) \\
                               &= \varphi(q_{n*}(P_1,...,P_1,A_{11}) + q_{n*}(P_1,...,P_1,B_{12}) \\&+ q_{n*}(P_1,...,P_1,C_{21})).
\end{aligned}
$$
By injectivity of $\varphi$ we have
$$
q_{n*}(P_1,...,P_1,T) = q_{n*}(P_1,...,P_1,A_{11}) + q_{n*}(P_1,...,P_1,B_{12}) + q_{n*}(P_1,...,P_1,C_{21}),
$$
that is,
$$
2T_{11} + T_{12} + T_{21} = 2A_{11} + B_{12} + C_{21}.
$$
Therefore, $T_{11}=A_{11}$, $T_{12}=B_{12}$ and $T_{21}=C_{21}$.

In a similar way, using $q_{n*}(P_2,...,P_2,T)$, we obtain
$$
2T_{22} + T_{12} + T_{21} = 2D_{22} + B_{12} + C_{21}
$$
and then $T_{22}=D_{22}$.
\end{proof}

\begin{lemma}\label{lema5}
For all $A_{ij}, B_{ij} \in \A_{ij}$, we have $\varphi(A_{ij} + B_{ij}) = \varphi(A_{ij}) + \varphi(B_{ij})$ for $i \neq j$.
\end{lemma}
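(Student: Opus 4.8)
The plan is to follow the surjectivity/injectivity scheme of Lemmas \ref{lema1}--\ref{lema4}, applied now to two elements of the \emph{same} off-diagonal corner. Since the case $\A_{21}$ is obtained from the case $\A_{12}$ by interchanging the roles of $P_1$ and $P_2$, I treat only $A_{12},B_{12}\in\A_{12}$. By surjectivity of $\varphi$ choose $T\in\A$ with $\varphi(T)=\varphi(A_{12})+\varphi(B_{12})$ and write $T=T_{11}+T_{12}+T_{21}+T_{22}$. It suffices to prove $T=A_{12}+B_{12}$, for then $\varphi(A_{12}+B_{12})=\varphi(T)=\varphi(A_{12})+\varphi(B_{12})$.

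All information about the $T_{ij}$ is to be extracted from Claim \ref{Claim2} with $H=T$, $X=A_{12}$, $Y=B_{12}$, feeding $T$ into the admissible products $q_{n*}(S,\dots,S,\,\cdot\,,\,\cdot\,)$ with $S\in\{I_\A,P_1,P_2\}$ and a free factor in the remaining slot. I foresee two tactics. First, I choose the repeated symbol and the free factor so that \emph{both} evaluations $q_{n*}(S,\dots,S,A_{12},\,\cdot\,)$ and $q_{n*}(S,\dots,S,B_{12},\,\cdot\,)$ vanish; two convenient instances are $q_{n*}(I_\A,\dots,I_\A,P_1-P_2,A_{12})=0$ and $q_{n*}(P_1,\dots,P_1,A_{12},P_1)=0$, together with the same identities for $B_{12}$. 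For these the right-hand side of Claim \ref{Claim2} collapses to $\varphi(0)=0$ by Lemma \ref{Claim1}, and injectivity turns the left-hand side into a homogeneous relation among the $T_{ij}$. Running a few such products should force $T_{11}=T_{22}$, $T_{21}=0$ and kill the diagonal, reducing $T$ to its $(1,2)$-block.

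The second tactic identifies the surviving block. Taking $S=I_\A$ and a free factor $C_{21}\in\A_{21}$, the evaluation $q_{n*}(I_\A,\dots,I_\A,A_{12},C_{21})$ equals $2^{n-2}(A_{12}C_{21}+C_{21}A_{12})$, whose two summands lie in the \emph{different} corners $\A_{11}$ and $\A_{22}$; hence Lemma \ref{lema1} applies and lets me split the corresponding $\varphi$. Combining this with Claim \ref{Claim2} and injectivity, I expect to reach $(T_{12}-A_{12}-B_{12})\,C_{21}=0$ for every $C_{21}\in\A_{21}$, i.e. $(T_{12}-A_{12}-B_{12})\,\A\,P_1=0$, and then the faithfulness hypothesis $\left(\spadesuit\right)$ forces $T_{12}=A_{12}+B_{12}$, completing the proof.

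The delicate point, and the reason this lemma is isolated from Lemmas \ref{lema1}--\ref{lema4}, is exactly that $A_{12}$ and $B_{12}$ occupy the \emph{same} corner. In Lemma \ref{lema2} the two summands sat in different corners, so one product routed them to different Peirce blocks and the earlier additivity recombined the right-hand side into a single $\varphi$ at once. Here any single product necessarily sends $A_{12}$ and $B_{12}$ to the \emph{same} block, so that immediate recombination is unavailable; the whole burden is shifted onto products that annihilate both summands (to make the right-hand side $\varphi(0)$) and onto the mixed products whose outputs split across $\A_{11}$ and $\A_{22}$ through the $*$-Jordan structure. Arranging these comparisons so that exactly one $\varphi$ survives on each side---and hence injectivity and $\left(\spadesuit\right)$ can be applied---is the main obstacle I expect to have to overcome.
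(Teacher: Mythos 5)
Your reduction of $T$ to its $(1,2)$-block does go through: combining Claim \ref{Claim2} with the products $q_{n*}(P_1,\dots,P_1,\frac{1}{2^{n-2}}P_1,T)$ and $q_{n*}(P_1,\dots,P_1,T,P_1)$ (both of which act identically on $A_{12}$ and $B_{12}$, the second annihilating them) forces $T_{11}=T_{22}=0$ and then $T_{21}=0$, exactly as in Lemma \ref{lema2}. But the final step --- the only one that actually compares $T_{12}$ with $A_{12}+B_{12}$ --- rests on a miscomputation of the $*$-Jordan product. You claim $q_{n*}(I_\A,\dots,I_\A,A_{12},C_{21})=2^{n-2}(A_{12}C_{21}+C_{21}A_{12})$, with summands in the two corners $\A_{11}$ and $\A_{22}$. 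The product, however, is $\left\{X,Y\right\}_*=XY+YX^*$, with an involution on the second factor: the correct value is $2^{n-2}(A_{12}C_{21}+C_{21}A_{12}^*)$, and since $A_{12}^*\in\A_{21}$ (Claim \ref{obs}) the second summand is $C_{21}A_{12}^*=0$. So the output lies entirely in $\A_{11}$; it does not split across two corners, and Lemma \ref{lema1} has nothing to recombine. Claim \ref{Claim2} then leaves you with $\varphi(q_{n*}(\dots,T,C_{21}))=\varphi(2^{n-2}A_{12}C_{21})+\varphi(2^{n-2}B_{12}C_{21})$, a sum of two $\varphi$'s of elements of the \emph{same} diagonal corner, and additivity on a diagonal corner is precisely Lemma \ref{lema6}, which is proved \emph{after}, and by means of, Lemma \ref{lema5}. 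Hence injectivity cannot be invoked and the relation $(T_{12}-A_{12}-B_{12})C_{21}=0$ is unreachable. This is not a repairable detail of your choice of product: every admissible product is linear in the slot occupied by $A_{12}$ or $B_{12}$, so it sends these two same-corner elements to elements with identical Peirce support; the right-hand side of Claim \ref{Claim2} can therefore never be collapsed by Lemmas \ref{lema1}--\ref{lema4} unless both images vanish, and vanishing products carry no information about $T_{12}$.

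This structural obstruction is exactly why the paper abandons the surjectivity/injectivity scheme for this lemma and argues directly, Martindale-style, with no $T$ at all. One verifies the identity $q_{n*}(P_i,\dots,P_i,\frac{1}{2^{n-2}}P_i+\frac{1}{2^{n-3}}A_{ij},P_j+B_{ij})=A_{ij}+B_{ij}+A_{ij}^*+B_{ij}A_{ij}^*$, applies the multiplicativity hypothesis to these two composite (``affine'') arguments, splits $\varphi$ of each argument and of the left-hand side using Lemmas \ref{lema2}--\ref{lema4} (there the summands \emph{do} lie in distinct corners: $\A_{ij}$, $\A_{ji}$, $\A_{ii}$), expands $q_{n*}$ by multilinearity, converts each resulting term back through $\varphi$, and cancels the common terms $\varphi(A_{ij}^*)$ and $\varphi(B_{ij}A_{ij}^*)$ from both sides. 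Some such use of the multiplicative hypothesis on composite arguments --- rather than on $T$ against $A_{12}$ and $B_{12}$ separately --- is what your proposal is missing.
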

\begin{proof}
By Lemma \ref{lema4} we have
\begin{equation*}
\begin{split}
\varphi(A_{ij} + B_{ij}) &+ \varphi(A_{ij}^*) + \varphi(B_{ij}A_{ij}^*) = \varphi(A_{ij} + B_{ij} + A_{ij}^* + B_{ij}A_{ij}^*) \\
                    &= \varphi(q_{n*}(P_i,...,P_i,\frac{1}{2^{n-2}}P_i+\frac{1}{2^{n-3}}A_{ij},P_j+B_{ij})) \\
                    &= q_{n*}(\varphi(P_i),...,\varphi(P_i),\varphi(\frac{1}{2^{n-2}}P_i+\frac{1}{2^{n-3}}A_{ij}),\varphi(P_j+B_{ij})) \\
                    &= q_{n*}(\varphi(P_i),...,\varphi(P_i),\varphi(\frac{1}{2^{n-2}}P_i)+\varphi(\frac{1}{2^{n-3}}A_{ij}),\varphi(P_j)+\varphi(B_{ij})) \\
                    &= \varphi(q_{n*}(P_i,...,P_i,\frac{1}{2^{n-2}}P_i,P_j)) \\
                    &+ \varphi(q_{n*}(P_i,...,P_i,\frac{1}{2^{n-2}}P_i,B_{ij})) \\
                    &+ \varphi(q_{n*}(P_i,...,P_i,\frac{1}{2^{n-3}}A_{ij},P_j)) \\
                    &+ \varphi(q_{n*}(P_i,...,P_i,\frac{1}{2^{n-3}}A_{ij},B_{ij})) \\
                    &= \varphi(B_{ij}) + \varphi(A_{ij} + A_{ij}^*) + \varphi(B_{ij}A_{ij}^*) \\
                    &= \varphi(B_{ij}) + \varphi(A_{ij}) + \varphi(A_{ij}^*) + \varphi(B_{ij}A_{ij}^*). 
\end{split}
\end{equation*}
Therefore,
$$
\varphi(A_{ij} + B_{ij}) = \varphi(A_{ij}) + \varphi(B_{ij}).
$$
\end{proof}

\begin{lemma}\label{lema6}
For all $A_{ii}, B_{ii} \in \A_{ii}$, we have $\varphi(A_{ii} + B_{ii}) = \varphi(A_{ii}) + \varphi(B_{ii})$ for $i \in \left\{1,2\right\}.$
\end{lemma}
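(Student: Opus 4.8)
The plan is to reduce the diagonal case to the off-diagonal additivity already obtained in Lemma \ref{lema5}, using the nondegeneracy hypothesis $(\spadesuit)$ to pin down the surviving block. Fix $i$; I would treat $i=1$, the case $i=2$ being symmetric under the interchange of $P_1,P_2$ and of $\A_{12},\A_{21}$. By surjectivity, choose $T=T_{11}+T_{12}+T_{21}+T_{22}$ with $\varphi(T)=\varphi(A_{11})+\varphi(B_{11})$.

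First I would locate $T$ in $\A_{11}$. Since $q_{n*}(P_2,\dots,P_2,A_{11})=2^{n-2}(P_2A_{11}+A_{11}P_2)=0$ and likewise for $B_{11}$, Claim \ref{Claim2} together with Lemma \ref{Claim1} gives $\varphi(q_{n*}(P_2,\dots,P_2,T))=\varphi(0)=0$, so injectivity yields $q_{n*}(P_2,\dots,P_2,T)=2^{n-2}(2T_{22}+T_{12}+T_{21})=0$. Comparing the four blocks forces $T_{22}=T_{12}=T_{21}=0$, hence $T=T_{11}$.

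The heart of the argument is to recover $T_{11}$ by testing against an arbitrary $C_{12}\in\A_{12}$. As in the computation in the proof of Lemma \ref{lema1}, for any $Z\in\A_{11}$ one has $q_{n*}(P_1,\dots,P_1,Z,C_{12})=2^{n-2}(ZC_{12}+C_{12}Z^*)=2^{n-2}ZC_{12}\in\A_{12}$, since $C_{12}Z^*\in\A_{12}\A_{11}=\{0\}$ (as $Z^*\in\A_{11}$ by Claim \ref{obs}). Applying Claim \ref{Claim2} to $\varphi(T)=\varphi(A_{11})+\varphi(B_{11})$ and then Lemma \ref{lema5} to add the two resulting $\A_{12}$-terms, injectivity gives
$$
2^{n-2}T_{11}C_{12}=2^{n-2}A_{11}C_{12}+2^{n-2}B_{11}C_{12}=2^{n-2}(A_{11}+B_{11})C_{12},
$$
so that $(T_{11}-A_{11}-B_{11})C_{12}=0$ for every $C_{12}\in\A_{12}=P_1\A P_2$.

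Finally I would invoke $(\spadesuit)$. Putting $X=T_{11}-A_{11}-B_{11}\in\A_{11}$ and using $X=XP_1$, the previous line says exactly that $X\A P_2=\{0\}$, whence $(\spadesuit)$ forces $X=0$; thus $T=T_{11}=A_{11}+B_{11}$ and $\varphi(A_{11}+B_{11})=\varphi(A_{11})+\varphi(B_{11})$. I expect the main obstacle to be conceptual rather than computational: two diagonal elements cannot be separated by a single $*$-Jordan product the way the off-diagonal ones were in Lemma \ref{lema5}, so diagonal additivity is not directly available and must be bootstrapped through an auxiliary factor $C_{12}$; the role of $(\spadesuit)$ is precisely to cancel that auxiliary factor at the end.
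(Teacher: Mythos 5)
Your proof is correct and follows essentially the same route as the paper's: surjectivity to produce $T$, the product $q_{n*}(P_j,\dots,P_j,T)$ to kill the three blocks $T_{jj}$, $T_{ij}$, $T_{ji}$, then testing against $C_{ij}\in\A_{ij}$ via Claim \ref{Claim2} and Lemma \ref{lema5}, and finally $(\spadesuit)$ to conclude $T_{ii}=A_{ii}+B_{ii}$. The only differences are cosmetic (fixing $i=1$ by symmetry, and spelling out the block computations and the use of $\varphi(0)=0$ that the paper leaves implicit).
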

\begin{proof}
Since $\varphi$ is surjective, given $\varphi(A_{ii})+\varphi(B_{ii}) \in \A'$, $i=1,2$, there exists $T \in \A$ such that 
$\varphi(T) = \varphi(A_{ii})+\varphi(B_{ii})$, with $T=T_{11}+T_{12}+T_{21}+T_{22}$. By Claim \ref{Claim2}, for $j \neq i$,
$$
\varphi(q_{n*}(P_j,...,P_j,T)) = \varphi(q_{n*}(P_j,...,P_j,A_{ii})) + \varphi(q_{n*}(P_j,...,P_j,B_{ii})) = 0.
$$
Then, $T_{ij}=T_{ji}=T_{jj}=0$. We just have to show that $T_{ii} = A_{ii} + B_{ii}$. Given $C_{ij} \in \A_{ij}$, using Lemma \ref{lema5} and  
Claim \ref{Claim2} we have
$$
\begin{aligned}
\varphi(q_{n*}(P_i,...,P_i,T,C_{ij})) &= \varphi(q_{n*}(P_i,...,P_i,A_{ii},C_{ij}))+\varphi(q_{n*}(P_i,...,P_i,B_{ii},C_{ij})) \\
                                      &= \varphi(q_{n*}(P_i,...,P_i,A_{ii},C_{ij}) + q_{n*}(P_i,...,P_i,B_{ii},C_{ij})).
\end{aligned}
$$
By injectivity of $\varphi$ we obtain
$$
q_{n*}(P_i,...,P_i,T,C_{ij}) = q_{n*}(P_i,...,P_i,A_{ii},C_{ij}) + q_{n*}(P_i,...,P_i,B_{ii},C_{ij}),
$$
that is,
$$
(T_{ii} - A_{ii} - B_{ii})C_{ij} = 0.
$$
Finally, by $\left(\spadesuit\right)$ we conclude that $T_{ii} = A_{ii} + B_{ii}$.
\end{proof}

Now we are able to show that $\varphi$ preserves $*$-addition.

Using Lemmas \ref{lema4}, \ref{lema5}, \ref{lema6} we have, for all $A,B \in \A$,
$$
\begin{aligned}
\varphi(A + B) &= \varphi(A_{11}+A_{12}+A_{21}+A_{22}+B_{11}+B_{12}+B_{21}+B_{22}) \\
               &= \varphi(A_{11}+B_{11})+\varphi(A_{12}+B_{12})+\varphi(A_{21}+B_{21})+\varphi(A_{22}+B_{22}) \\
               &= \varphi(A_{11})+\varphi(B_{11})+\varphi(A_{12})+\varphi(B_{12})+\varphi(A_{21})+\varphi(B_{21})+\varphi(A_{22})+\varphi(B_{22}) \\
               &= \varphi(A_{11}+A_{12}+A_{21}+A_{22}) + \varphi(B_{11}+B_{12}+B_{21}+B_{22}) = \varphi(A) + \varphi(B).
\end{aligned}
$$
Besides, on the one hand, since $\varphi$ is additive it follows that 
$$\varphi(A + A^{*}) = \varphi(A) + \varphi(A^{*}).$$
On the other hand, by additivity of $\varphi$,
$$
\begin{aligned}
2^{n-2}\varphi(A + A^{*}) &= \varphi(2^{n-2}(A + A^{*}))  = \varphi(q_{n*}(I_{\A},...,I_\A,A,I_{\A})) \\
               &= q_{n*}(I_{\A'},...,I_{\A'},\varphi(A),I_{\A'}) = 2^{n-2}(\varphi(A) + \varphi(A)^{*}).
\end{aligned}
$$
Therefore $\varphi(A^{*}) = \varphi(A)^{*}$ and Theorem \ref{mainthm1} is proved.

\vspace{0,5cm}
Now we focus our attention on investigate the problem of when $\varphi$ is a $*$-ring isomorphism. 
We prove the following result:
\\

\begin{theorem}\label{mainthm2}
Let $\A$ and $\A'$ be two  
$C^*$-algebras with identities $I_{\A}$ and $I_{\A'}$, respectively, and $P_1$ and $P_2 = I_{\A} - P_1$ nontrivial 
projections in $\A$. Suppose that $\A$ and $\A'$ satisfy:
\begin{eqnarray*}
  &&\left(\spadesuit\right) \ \ \  \ \ \  X \A P_i = \left\{0\right\} \ \ \  \mbox{implies} \ \ \ X = 0 
  \\
  \mbox{ and}
	\\&& \left(\clubsuit \right) \ \ \  \ Y \A' \varphi(P_i) = \left\{0\right\} \ \ \  \mbox{implies} \ \ \ Y = 0.
\end{eqnarray*}
If $\varphi: \A \rightarrow \A'$ is a bijective unital map which satisfies
\begin{eqnarray*}
 &&\varphi(q_{n*}(I_\A,..., I_\A, A,B)) = q_{n*}(\varphi(I_\A),...., \varphi(I_\A), \varphi(A),\varphi(B))
\\&and&
\\&&\varphi(q_{n*}(P, ..., P, A, B)) = q_{n*}(\varphi(P), ..., \varphi(P),\varphi(A),\varphi(B)),
\end{eqnarray*}
 for all $A, B \in \A$ and $P \in \left\{P_1, P_2\right\}$ then $\varphi$ is $*$-ring isomorphism.
 
\end{theorem}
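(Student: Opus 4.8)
The plan is to treat Theorem \ref{mainthm1} as a black box: under the present, stronger hypotheses $\varphi$ is already additive and $*$-preserving, and it is bijective by assumption, so the only thing left to establish is multiplicativity, $\varphi(AB)=\varphi(A)\varphi(B)$ for all $A,B\in\A$. First I would extract a workable product identity from the defining relation with $T=I_\A$. A short induction on the definition of $q_{n*}$ gives $q_{n*}(I_\A,\dots,I_\A,A,B)=2^{\,n-2}(AB+BA^{*})$ (uniformly for $n\ge 2$); combining this with unitality $\varphi(I_\A)=I_{\A'}$, additivity, and $\varphi(A^{*})=\varphi(A)^{*}$, the hypothesis collapses to
$$\varphi(AB + BA^{*}) = \varphi(A)\varphi(B) + \varphi(B)\varphi(A)^{*} \qquad (\star)$$
for all $A,B\in\A$; that is, $\varphi$ preserves the $*$-Jordan product $A\bullet B = AB+BA^{*}$. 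This single identity will drive everything that follows.

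Second, I would fix the Peirce geometry on the target side. Put $Q_i:=\varphi(P_i)$. Then $P_1+P_2=I_\A$ and additivity give $Q_1+Q_2=I_{\A'}$, $*$-preservation gives $Q_i^{*}=Q_i$, and $(\star)$ with $A=B=P_i$ gives $Q_i^{2}=Q_i$; hence $Q_1,Q_2$ are orthogonal projections and $\A'=\bigoplus_{k,l}\A'_{kl}$ with $\A'_{kl}=Q_k\A'Q_l$. I would then show $\varphi(\A_{ij})\subseteq\A'_{ij}$. For a diagonal block this drops out of $(\star)$ with $A=P_j$ ($j\ne i$), which forces the off-$(ii)$ components of $\varphi(A_{ii})$ to vanish. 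For an off-diagonal block, $(\star)$ with $A=P_i$ confines $\varphi(A_{ij})$ to $\A'_{ij}\oplus\A'_{ji}$, and a further application of $(\star)$ with $B=P_j$, together with $\varphi(A_{ij}^{*})=\varphi(A_{ij})^{*}$ and Claim \ref{obs}, annihilates the stray $\A'_{ji}$ part. Bijectivity then upgrades these inclusions to equalities $\varphi(\A_{ij})=\A'_{ij}$.

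Third, I would establish multiplicativity block by block by feeding Peirce-homogeneous elements into $(\star)$ and reading off components. Whenever at least one factor is off-diagonal, the summands $AB$ and $BA^{*}$ (respectively $\varphi(A)\varphi(B)$ and $\varphi(B)\varphi(A)^{*}$) land in distinct Peirce blocks of $\A'$, so comparing components immediately gives $\varphi(A_{ij}B_{jl})=\varphi(A_{ij})\varphi(B_{jl})$ in every such case. The remaining, genuinely hard, case is the diagonal-diagonal product $\varphi(A_{ii}B_{ii})$, where $A_{ii}B_{ii}$ and $B_{ii}A_{ii}^{*}$ sit in the same block $\A_{ii}$ and cannot be separated by Peirce bookkeeping. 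Here I would bootstrap via associativity: for an arbitrary $C_{ij}\in\A_{ij}$ with $j\ne i$, applying the already-proved off-diagonal multiplicativity to both parenthesizations of $A_{ii}B_{ii}C_{ij}$ yields $\big(\varphi(A_{ii}B_{ii})-\varphi(A_{ii})\varphi(B_{ii})\big)\varphi(C_{ij})=0$. Since $\varphi(\A_{ij})=\A'_{ij}=Q_i\A'Q_j$, letting $C_{ij}$ vary shows that $Z:=\varphi(A_{ii}B_{ii})-\varphi(A_{ii})\varphi(B_{ii})\in\A'_{ii}$ satisfies $Z\,\A'\,\varphi(P_j)=\{0\}$, whereupon hypothesis $(\clubsuit)$ forces $Z=0$.

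Finally, expanding arbitrary $A,B$ into Peirce components and invoking additivity assembles the block identities into $\varphi(AB)=\varphi(A)\varphi(B)$; combined with additivity, $*$-preservation, and bijectivity this makes $\varphi$ a $*$-ring isomorphism. I expect the diagonal-diagonal step to be the main obstacle, since it is the only place where the extra hypothesis $(\clubsuit)$ on $\A'$ is genuinely used: it is precisely what converts an annihilation condition on the target into an equality, playing the role on the target side that $(\spadesuit)$ played for additivity in Theorem \ref{mainthm1}.
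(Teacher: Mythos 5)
Your proof is correct, and its skeleton matches the paper's: invoke Theorem \ref{mainthm1} for $*$-additivity, show that $Q_i=\varphi(P_i)$ are projections (the paper's Lemma \ref{lemam0}), show that $\varphi$ respects the Peirce decomposition (Lemma \ref{lemam1}), prove multiplicativity block by block, and settle the diagonal-diagonal product by the associativity bootstrap plus $(\clubsuit)$ (Lemma \ref{lemam3}). There are, however, two genuine differences worth noting. First, you collapse the whole hypothesis into the single identity $(\star)$, i.e. preservation of the $*$-Jordan $2$-product $A\bullet B=AB+BA^{*}$, derived from the $I_\A$-form alone (your computation $q_{n*}(I_\A,\dots,I_\A,A,B)=2^{n-2}(AB+BA^{*})$ is correct); the paper instead keeps computing with $q_{n*}(P_i,\dots,P_i,\cdot,\cdot)$ throughout (Lemmas \ref{lemam1}, \ref{lemam2}, \ref{lemam4}), so your multiplicative half never uses the $P$-form of the hypothesis at all (it is still needed inside Theorem \ref{mainthm1}, which you use as a black box). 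Second, for the case $A_{ij}B_{jj}$ the paper runs the annihilation argument a second time and invokes $(\clubsuit)$ again (Lemma \ref{lemam5}), whereas you get this case for free from $(\star)$: since $A_{ij}B_{jj}\in\A_{ij}$ while $B_{jj}A_{ij}^{*}\in\A_{ji}$, additivity and block preservation let you equate the $\A'_{ij}$ components directly. Your route is therefore slightly more economical: $(\clubsuit)$ enters exactly once, in the genuinely unavoidable diagonal-diagonal case, and all the remaining mixed products are handled uniformly by Peirce bookkeeping. The only step you should make explicit is the claim that bijectivity upgrades $\varphi(\A_{ij})\subseteq\A'_{ij}$ to equality: this follows in one line from additivity, injectivity and $\varphi(0)=0$ applied to the Peirce components of a preimage, and indeed the paper uses the same fact tacitly when it passes from $Z\varphi(X)=0$ for all $X\in\A_{ij}$ to $Z\A'\varphi(P_j)=\{0\}$ in Lemma \ref{lemam3}.
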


Since $\varphi$ is $*$-additive, by Theorem \ref{mainthm1},  it is enough to verify that $\varphi(AB) = \varphi(A)\varphi(B)$. Firstly, let us prove the following lemmas:

\begin{lemma}\label{lemam0}
 $Q_i = \varphi(P_i)$ is a projection in $\A'$, with $i \in \{1,2\}$.
\end{lemma}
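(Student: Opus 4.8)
The plan is to establish the two defining properties of a projection for $Q_i = \varphi(P_i)$ separately. Self-adjointness is immediate from what has already been proved: since Theorem \ref{mainthm1} gives $\varphi(A^*) = \varphi(A)^*$ and $P_i$ is a projection, $Q_i^* = \varphi(P_i)^* = \varphi(P_i^*) = \varphi(P_i) = Q_i$. It remains to show $Q_i^2 = Q_i$, which I would do for $i = 1$, the case $i = 2$ being symmetric. Throughout I will use that $\varphi$ is unital and $*$-additive, so that $Q_1 + Q_2 = \varphi(P_1) + \varphi(P_2) = \varphi(I_\A) = I_{\A'}$, i.e. $Q_2 = I_{\A'} - Q_1$, and that $Q_1, Q_2$ are self-adjoint.

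The engine of the computation is the elementary identity $q_{k*}(S, \ldots, S) = 2^{k-1} S^k$, valid for any self-adjoint $S$ and proved by a one-line induction from $\{2^{k-2}S^{k-1}, S\}_* = 2^{k-2}S^{k-1}\cdot S + S\cdot (2^{k-2}S^{k-1})^* = 2^{k-1}S^k$. When $n = 2$ the result is then instantaneous: specializing the $P$-equation at $A = B = P_1$ gives $\varphi(2P_1) = q_{2*}(Q_1, Q_1) = 2Q_1^2$, while $*$-additivity gives $\varphi(2P_1) = 2Q_1$, so $Q_1^2 = Q_1$.

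For $n \geq 3$ I would feed two specializations of the second functional equation (the one with $P = P_1$) into this identity. First, with $A = B = P_1$, the left side is $\varphi(q_{n*}(P_1, \ldots, P_1)) = \varphi(2^{n-1}P_1) = 2^{n-1}Q_1$, while the right side is $q_{n*}(Q_1, \ldots, Q_1) = 2^{n-1}Q_1^n$, hence $Q_1^n = Q_1$. Second, with $A = P_1$ and $B = P_2$, the left side vanishes because $q_{n*}(P_1, \ldots, P_1, P_2) = 2^{n-2}(P_1 P_2 + P_2 P_1) = 0$, whereas the right side is $q_{n*}(Q_1, \ldots, Q_1, Q_2) = 2^{n-2}(Q_1^{n-1}Q_2 + Q_2 Q_1^{n-1})$; substituting $Q_2 = I_{\A'} - Q_1$ collapses this to $2^{n-1}(Q_1^{n-1} - Q_1^n) = 0$, i.e. $Q_1^{n-1} = Q_1^n$. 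Combining the two relations yields $Q_1^{n-1} = Q_1^n = Q_1$, and multiplying $Q_1^{n-1} = Q_1$ by $Q_1$ gives $Q_1^2 = Q_1^n = Q_1$.

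The only real care needed, and the step most likely to trip up the bookkeeping, is counting the arguments correctly: in $q_{n*}(P, \ldots, P, A, B)$ there are exactly $n - 2$ leading copies of $P$, so the two specializations above produce, respectively, $n$ copies of $Q_1$, and $n-1$ copies of $Q_1$ followed by one $Q_2$. Provided these counts and the recursion for $q_{k*}$ on self-adjoint elements are handled correctly, no further difficulty arises, and the same argument with $P_1$ and $P_2$ interchanged shows $Q_2$ is a projection.
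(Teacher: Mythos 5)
Your proof is correct, but it takes a more roundabout route than the paper for $n \geq 3$. The paper uses the \emph{first} functional equation (the one with leading copies of $I_\A$) at $A = B = P_i$: since the leading identities only contribute factors of $2$, one gets $q_{n*}(I_\A,\ldots,I_\A,P_i,P_i) = 2^{n-1}P_i$ and $q_{n*}(I_{\A'},\ldots,I_{\A'},Q_i,Q_i) = 2^{n-1}Q_i^2$, so that $2^{n-1}Q_i = 2^{n-1}Q_i^2$ in a single computation, uniformly for all $n \geq 2$, with no case split and no auxiliary relations. You instead specialize the $P$-equation twice, extracting $Q_1^n = Q_1$ and (via orthogonality $P_1P_2 = 0$, the identity $\varphi(0)=0$, and the substitution $Q_2 = I_{\A'} - Q_1$) the relation $Q_1^{n-1} = Q_1^n$, then combine the two; each step checks out, including the argument counts, but the detour through $n$-th powers and the extra ingredients are unnecessary. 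One genuine merit of your write-up: you state explicitly that $Q_i^* = Q_i$ follows from the $*$-additivity of Theorem \ref{mainthm1}, a fact the paper uses silently --- without it, the evaluation of $q_{n*}(I_{\A'},\ldots,I_{\A'},Q_i,Q_i)$ only yields $2^{n-2}(Q_iQ_i + Q_iQ_i^*)$, not $2^{n-1}Q_i^2$, and of course self-adjointness is also needed for $Q_i$ to be a projection at all rather than a mere idempotent.
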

\begin{proof}
 By additivity of $\varphi$ we have
 $$
 \begin{aligned}
 2^{n-1}Q_i &= 2^{n-1}\varphi(P_i) = \varphi(2^{n-1}P_i) \\
            &= \varphi(q_{n*}(I_\A, ...,I_\A,P_i,P_i)) \\
            &= q_{n*}(I_{\A'}, ...,I_{\A'},\varphi(P_i),\varphi(P_i)) \\
            &= 2^{n-1}\varphi(P_i)\varphi(P_i) = 2^{n-1}Q_iQ_i.
 \end{aligned}
 $$
 Therefore, $Q_iQ_i = Q_i$.
\end{proof}

\begin{lemma}\label{lemam1}
If $X \in \A_{ij}$ then $\varphi(X) \in \A'_{ij}$.
\end{lemma}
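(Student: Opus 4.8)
The plan is to transport the block structure of $\A$ across $\varphi$ using the projections $Q_i=\varphi(P_i)$. First I would set up the decomposition of $\A'$. Since $\varphi$ is unital and, by Theorem \ref{mainthm1}, additive, we have $Q_1+Q_2=\varphi(P_1)+\varphi(P_2)=\varphi(I_\A)=I_{\A'}$, and by Lemma \ref{lemam0} each $Q_i$ is a projection, so $Q_1Q_2=Q_1(I_{\A'}-Q_1)=0$. Hence $\A'=\A'_{11}\oplus\A'_{12}\oplus\A'_{21}\oplus\A'_{22}$ with $\A'_{kl}=Q_k\A'Q_l$ a genuine direct sum of subspaces. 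Writing $W=\varphi(X)=W_{11}+W_{12}+W_{21}+W_{22}$ with $W_{kl}=Q_kWQ_l$, the whole argument reduces to reading off, block by block in this direct sum, that the ``wrong'' components $W_{kl}$ vanish.

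The computational engine is that for $T\in\{P_1,P_2,I_\A\}$ (so $T^{*}=T$ and $T^{2}=T$) the repeated factors collapse: $q_{(n-1)*}(T,\ldots,T)=2^{\,n-2}T$, whence
\[
q_{n*}(T,\ldots,T,A)=2^{\,n-2}(TA+AT),\qquad q_{n*}(T,\ldots,T,A,B)=2^{\,n-3}\big[(TA+AT)B+B(A^{*}T+TA^{*})\big]
\]
for $n\ge 3$, the case $n=2$ being the direct definition. I would then apply $\varphi$ to such products, use the hypothesis to pass $\varphi$ inside $q_{n*}$, and use additivity to pull out the integer scalars (which cancel on both sides).

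For the diagonal case $i=j$, take $X\in\A_{ii}$ and $j\ne i$; since $P_jX=XP_j=0$ we get $q_{n*}(P_j,\ldots,P_j,X)=0$, so applying $\varphi$ gives $0=q_{n*}(Q_j,\ldots,Q_j,W)=2^{\,n-2}(Q_jW+WQ_j)$. Expanding $Q_jW+WQ_j=2W_{jj}+W_{12}+W_{21}$ and matching components forces $W_{jj}=W_{12}=W_{21}=0$, i.e. $W=W_{ii}\in\A'_{ii}$. For the off-diagonal case $i\ne j$, say $i=1,j=2$: from $P_1X=X$, $XP_1=0$ I get $q_{n*}(P_1,\ldots,P_1,X)=2^{\,n-2}X$, so after applying $\varphi$ and cancelling, $W=Q_1W+WQ_1=2W_{11}+W_{12}+W_{21}$; matching the $\A'_{11}$ and $\A'_{22}$ components yields $W_{11}=W_{22}=0$, leaving $W=W_{12}+W_{21}$. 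To kill $W_{21}$ I feed in one more slot: $q_{n*}(P_1,\ldots,P_1,X,P_2)=2^{\,n-3}(X+X^{*})$, and applying $\varphi$ together with the $*$-additivity $\varphi(X^{*})=\varphi(X)^{*}$ gives, after the scalar cancels, $W+W^{*}=q_{n*}(Q_1,\ldots,Q_1,W,Q_2)=W_{12}+W_{12}^{*}$. Since $W+W^{*}=W_{12}+W_{21}+W_{12}^{*}+W_{21}^{*}$, subtracting yields $W_{21}+W_{21}^{*}=0$, and as $W_{21}\in\A'_{21}$, $W_{21}^{*}\in\A'_{12}$ lie in independent summands we conclude $W_{21}=0$, so $W=W_{12}\in\A'_{12}$. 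The case $X\in\A_{21}$ is identical with $P_1,P_2$ interchanged.

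The routine-but-delicate part, and the only place where care is needed, is the explicit evaluation of the $q_{n*}$ products, keeping track of $T^{*}=T$, $T^{2}=T$ and of which block survives each left/right multiplication, together with the separate treatment of $n=2$, where there are no leading repeated factors and the scalar must be read directly from $q_{2*}$ rather than from the collapse formula. I would emphasize that this lemma uses only $*$-additivity, unitality, and the projection property of the $Q_i$ from Lemma \ref{lemam0}; the primeness-type hypotheses $\left(\spadesuit\right)$ and $\left(\clubsuit\right)$ are not needed at this stage.
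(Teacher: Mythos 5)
Your proof is correct, and its overall strategy coincides with the paper's: decompose $\A'$ along the projections $Q_i=\varphi(P_i)$, evaluate $q_{n*}$ on tuples whose leading entries are projections, apply the hypothesis, and match blocks. Where you genuinely diverge is the step that kills the unwanted off-diagonal block. For $X\in\A_{12}$, the paper uses the \emph{vanishing} product $q_{n*}(P_i,\ldots,P_i,X,P_i)=0$, applies $\varphi$, and multiplies the resulting identity on the left by $Q_j$ to extract $Q_j\varphi(X)Q_i=0$; you instead use the \emph{non-vanishing} product $q_{n*}(P_1,\ldots,P_1,X,P_2)=2^{n-3}(X+X^*)$, invoke $*$-additivity $\varphi(X^*)=\varphi(X)^*$ from Theorem \ref{mainthm1}, and get $W_{21}+W_{21}^*=0$, whence $W_{21}=0$ because $\A'_{21}$ and $\A'_{12}$ are independent summands. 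Both routes are valid, and both are available here since Lemma \ref{lemam1} sits inside the proof of Theorem \ref{mainthm2}, after $*$-additivity has been established; the paper's variant has the small advantage of needing only plain additivity (to pull out integer scalars), while yours makes the role of the involution explicit. Two places where your write-up is more careful than the paper's: you treat the diagonal case $X\in\A_{ii}$ explicitly (the paper dismisses it with ``in a similar way''), and you flag the degenerate case $n=2$, where the leading repeated slots are absent and the hypothesis must be read directly as $\varphi(q_{2*}(A,B))=q_{2*}(\varphi(A),\varphi(B))$. One point worth stating rather than leaving implicit (in both proofs): the argument needs $Q_i^*=Q_i$, e.g.\ so that $(Q_2WQ_1)^*\in Q_1\A'Q_2$ and so that the repeated factors collapse as $q_{k*}(Q_i,\ldots,Q_i)=2^{k-1}Q_i$; Lemma \ref{lemam0} as proved only gives idempotency, and self-adjointness comes from $*$-additivity applied to $P_i^*=P_i$.
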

\begin{proof}
Firstly, given $X \in \A_{ij}$, with $i \neq j$, we observe that
$$
\begin{aligned}
2^{n-2}\varphi(X) &= \varphi(2^{n-2}X) = \varphi(q_{n*}(P_j,...,P_j,X)) = q_{n*}(\varphi(P_j),...,\varphi(P_j),\varphi(X)) \\
                  &= 2^{n-2}(Q_j\varphi(X) + \varphi(X)Q_j),
\end{aligned}
$$
that is, $Q_i\varphi(X)Q_i = Q_j\varphi(X)Q_j = 0$. Even more,
$$
\begin{aligned}
0 &= \varphi(q_{n*}(P_i,...,P_i,X,P_i)) = q_{n*}(Q_i,...,Q_i,\varphi(X),Q_i) \\
  &= 2^{n-3}(Q_i\varphi(X)Q_i + \varphi(X)Q_i + Q_i\varphi(X)^*Q_i + Q_i\varphi(X)^*).
\end{aligned}
$$
Multiplying left side by $Q_j$ we obtain $Q_j\varphi(X)Q_i = 0$. Therefore, $\varphi(X) \in \A'_{ij}$. 
In a similar way, if $X \in \A_{ii}$ we conclude that $\varphi(X) \in \A'_{ii}$.
\end{proof}

\begin{lemma}\label{lemam2}
If $A_{ii} \in \A_{ii}$ and $B_{ij} \in \A_{ij}$, with $i \neq j$, then $\varphi(A_{ii}B_{ij})=\varphi(A_{ii})\varphi(B_{ij})$.
\end{lemma}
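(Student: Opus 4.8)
The plan is to realize $A_{ii}B_{ij}$ (up to a harmless power of $2$) as a single value of $q_{n*}$ evaluated on projections together with $A_{ii}$ and $B_{ij}$, and then transport that identity through $\varphi$ using the hypothesis of Theorem \ref{mainthm2} and the already-established $*$-additivity. First I would compute $q_{n*}(P_i,\ldots,P_i,A_{ii},B_{ij})$ in $\A$, where the leading $n-2$ slots are filled with $P_i$. Using $q_{k*}(P_i,\ldots,P_i)=2^{k-1}P_i$ (a one-line induction from $P_i^2=P_i=P_i^*$), the first $n-2$ entries collapse to $2^{n-3}P_i$. Inserting $A_{ii}$ gives $\{2^{n-3}P_i,A_{ii}\}_*=2^{n-3}(P_iA_{ii}+A_{ii}P_i^*)=2^{n-2}A_{ii}$, since $A_{ii}\in\A_{ii}$ is fixed on both sides by $P_i$. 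Inserting $B_{ij}$ then yields $\{2^{n-2}A_{ii},B_{ij}\}_*=2^{n-2}(A_{ii}B_{ij}+B_{ij}A_{ii}^*)$, and the key observation is that the cross term vanishes: since $A_{ii}^*\in\A_{ii}$ (Claim \ref{obs}) and $B_{ij}\in\A_{ij}$ with $i\neq j$, we have $B_{ij}A_{ii}^*\in\A_{ij}\A_{ii}=\{0\}$ because $P_jP_i=0$. Hence $q_{n*}(P_i,\ldots,P_i,A_{ii},B_{ij})=2^{n-2}A_{ii}B_{ij}$.

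Next I would apply $\varphi$ and use the hypothesis with $P=P_i$ to get $\varphi(2^{n-2}A_{ii}B_{ij})=q_{n*}(Q_i,\ldots,Q_i,\varphi(A_{ii}),\varphi(B_{ij}))$, where $Q_i=\varphi(P_i)$. By $*$-additivity (Theorem \ref{mainthm1}) the left-hand side equals $2^{n-2}\varphi(A_{ii}B_{ij})$. For the right-hand side I would repeat verbatim the same computation, now in $\A'$: by Lemma \ref{lemam0} each $Q_i$ is a projection, so the leading entries collapse to $2^{n-3}Q_i$; by Lemma \ref{lemam1}, $\varphi(A_{ii})\in\A'_{ii}$ and $\varphi(B_{ij})\in\A'_{ij}$, so $Q_i$ fixes $\varphi(A_{ii})$ and its insertion produces $2^{n-2}\varphi(A_{ii})$. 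Inserting $\varphi(B_{ij})$ then gives $2^{n-2}(\varphi(A_{ii})\varphi(B_{ij})+\varphi(B_{ij})\varphi(A_{ii})^*)$, whose cross term vanishes as before: $\varphi(A_{ii})^*\in\A'_{ii}$ (the involution preserves the diagonal Peirce component, the $\A'$-analogue of Claim \ref{obs}) while $\varphi(B_{ij})\in\A'_{ij}$, so $\varphi(B_{ij})\varphi(A_{ii})^*\in\A'_{ij}\A'_{ii}=\{0\}$. Thus the right-hand side equals $2^{n-2}\varphi(A_{ii})\varphi(B_{ij})$.

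Comparing the two expressions yields $2^{n-2}\varphi(A_{ii}B_{ij})=2^{n-2}\varphi(A_{ii})\varphi(B_{ij})$, and cancelling the positive integer $2^{n-2}$ finishes the proof. The single formula also covers the base case $n=2$, where there are no leading projection slots and $2^{n-2}=1$, so no separate treatment is needed.

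I do not expect a serious obstacle: the argument is essentially bookkeeping built from the Peirce relation $P_jP_i=0$, the projection identities for $P_i$ and $Q_i$, and the transport results of Lemmas \ref{lemam0} and \ref{lemam1}. The only point demanding genuine care is checking that \emph{both} cross terms $B_{ij}A_{ii}^*$ and $\varphi(B_{ij})\varphi(A_{ii})^*$ vanish; this is precisely where the hypothesis $i\neq j$ and the $*$-closedness of the diagonal components enter, and where Lemma \ref{lemam1} (together with Claim \ref{obs} applied to $\A'$) is indispensable for locating the images $\varphi(A_{ii})$ and $\varphi(B_{ij})$ in the correct Peirce blocks of $\A'$.
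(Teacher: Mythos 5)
Your proof is correct and takes essentially the same route as the paper: both realize $2^{n-2}A_{ii}B_{ij}$ as $q_{n*}(P_i,\ldots,P_i,A_{ii},B_{ij})$, transport this through $\varphi$ via the hypothesis and integer-homogeneity from additivity, and then evaluate $q_{n*}(\varphi(P_i),\ldots,\varphi(P_i),\varphi(A_{ii}),\varphi(B_{ij}))$ in $\A'$ using Lemmas \ref{lemam0} and \ref{lemam1}. The only difference is that you spell out the Peirce-block computations, including the vanishing of the cross terms $B_{ij}A_{ii}^*$ and $\varphi(B_{ij})\varphi(A_{ii})^*$, which the paper leaves implicit.
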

\begin{proof}
Let $A_{ii} \in \A_{ii}$ and $B_{ij} \in \A_{ij}$, with $i \neq j$. Then, by Lemma \ref{lemam1} and additivity of $\varphi$,
$$
\begin{aligned}
2^{n-2}\varphi(A_{ii}B_{ij}) &= \varphi(2^{n-2}A_{ii}B_{ij}) = \varphi(q_{n*}(P_i,...,P_i,A_{ii},B_{ij})) \\
            &= q_{n*}(\varphi(P_i),...,\varphi(P_i),\varphi(A_{ii}),\varphi(B_{ij})) = 2^{n-2}\varphi(A_{ii})\varphi(B_{ij}).
\end{aligned}
$$
Therefore,
$$
\varphi(A_{ii}B_{ij}) = \varphi(A_{ii})\varphi(B_{ij}).
$$
\end{proof}

\begin{lemma}\label{lemam3}
If $A_{ii},B_{ii} \in \A_{ii}$ then $\varphi(A_{ii}B_{ii}) = \varphi(A_{ii})\varphi(B_{ii})$.
\end{lemma}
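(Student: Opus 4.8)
The plan is to bootstrap from the mixed-block case already settled in Lemma \ref{lemam2}, using associativity of the product in $\A$ to factor one element of $\A_{ij}$ in two different ways, and then to cancel a common off-diagonal factor by means of condition $(\clubsuit)$.

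First I would fix $A_{ii}, B_{ii} \in \A_{ii}$ and test the desired equality against an arbitrary $C_{ij} \in \A_{ij}$ with $j \neq i$. The crucial observation is the associativity identity $(A_{ii}B_{ii})C_{ij} = A_{ii}(B_{ii}C_{ij})$. Since $A_{ii}B_{ii} \in \A_{ii}$ and $C_{ij} \in \A_{ij}$, Lemma \ref{lemam2} applied to the left-hand grouping yields $\varphi((A_{ii}B_{ii})C_{ij}) = \varphi(A_{ii}B_{ii})\varphi(C_{ij})$. On the other hand $B_{ii}C_{ij} \in \A_{ij}$, so two applications of Lemma \ref{lemam2} to the right-hand grouping give $\varphi(A_{ii}(B_{ii}C_{ij})) = \varphi(A_{ii})\varphi(B_{ii}C_{ij}) = \varphi(A_{ii})\varphi(B_{ii})\varphi(C_{ij})$. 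Equating the two and writing $W = \varphi(A_{ii}B_{ii}) - \varphi(A_{ii})\varphi(B_{ii})$, I obtain $W\,\varphi(C_{ij}) = 0$ for every $C_{ij} \in \A_{ij}$.

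It then remains to deduce $W = 0$. By Lemma \ref{lemam1} both $\varphi(A_{ii}B_{ii})$ and $\varphi(A_{ii})\varphi(B_{ii})$ lie in $\A'_{ii} = Q_i\A'Q_i$ (where $Q_i = \varphi(P_i)$ is the projection of Lemma \ref{lemam0}), so $W \in \A'_{ii}$ and in particular $WQ_i = W$. Because $\varphi$ is additive and bijective and respects the block decomposition (Lemma \ref{lemam1}), it restricts to a bijection of $\A_{ij}$ onto $\A'_{ij} = Q_i\A'Q_j$; hence $W\varphi(C_{ij}) = 0$ says precisely that $WD = 0$ for all $D \in Q_i\A'Q_j$. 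Substituting $D = Q_iRQ_j$ for arbitrary $R \in \A'$ and using $WQ_i = W$ gives $W\A'Q_j = \{0\}$, that is, $W\A'\varphi(P_j) = \{0\}$. Now condition $(\clubsuit)$, applied with the index $j$, forces $W = 0$, which is exactly the asserted identity $\varphi(A_{ii}B_{ii}) = \varphi(A_{ii})\varphi(B_{ii})$.

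I expect the only delicate point to be the surjectivity of $\varphi$ from $\A_{ij}$ onto $\A'_{ij}$, which I would justify from additivity together with the inclusions $\varphi(\A_{kl}) \subseteq \A'_{kl}$ and the uniqueness of the direct-sum decomposition of $\A'$; everything else is a formal manipulation of the block relations $Q_i^2 = Q_i$ and $W = Q_iWQ_i$. It is worth noting that the brute-force alternative of expanding $q_{n*}(P_i,\dots,P_i,A_{ii},B_{ii})$ directly produces the unwanted cross term $\varphi(B_{ii}A_{ii}^*)$ on one side against $\varphi(B_{ii})\varphi(A_{ii})^*$ on the other, and so does not isolate $\varphi(A_{ii}B_{ii})$; the associativity route above circumvents this obstruction entirely.
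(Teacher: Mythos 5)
Your proposal is correct and follows essentially the same route as the paper: associativity $(A_{ii}B_{ii})C_{ij} = A_{ii}(B_{ii}C_{ij})$ combined with three applications of Lemma \ref{lemam2} to get $\bigl(\varphi(A_{ii}B_{ii}) - \varphi(A_{ii})\varphi(B_{ii})\bigr)\varphi(C_{ij}) = 0$, then the block structure and $(\clubsuit)$ to conclude. In fact you make explicit a step the paper leaves tacit, namely that $\varphi$ maps $\A_{ij}$ \emph{onto} $\A'_{ij}$ (via additivity, injectivity, and Lemma \ref{lemam1}), which is what legitimizes passing from $W\varphi(C_{ij})=0$ to $W\A'\varphi(P_j)=\{0\}$.
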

\begin{proof}
Let $X$ be an element of $\A_{ij}$, with $i \neq j$. Using Lemma \ref{lemam2} we obtain
$$
\varphi(A_{ii}B_{ii})\varphi(X) = \varphi(A_{ii}B_{ii}X) = \varphi(A_{ii})\varphi(B_{ii}X) = \varphi(A_{ii})\varphi(B_{ii})\varphi(X),
$$
that is,
$$
(\varphi(A_{ii}B_{ii}) - \varphi(A_{ii})\varphi(B_{ii}))\varphi(X) = 0.
$$
Now, by Lemma \ref{lemam1}, since $\varphi(X) \in \A'_{ij}$ and $\varphi(A_{ii}B_{ii}) - \varphi(A_{ii})\varphi(B_{ii}) \in \A'_{ii}$, we have
$$
(\varphi(A_{ii}B_{ii}) - \varphi(A_{ii})\varphi(B_{ii}))\A'\varphi(P_j) = \{0\}.
$$
Finally, $\left(\clubsuit \right)$ ensures that $\varphi(A_{ii}B_{ii}) = \varphi(A_{ii})\varphi(B_{ii})$.
\end{proof}

\begin{lemma} \label{lemam4}
If $A_{ij} \in \A_{ij}$ and $B_{ji} \in \A_{ji}$, with $i \neq j$, then $\varphi(A_{ij}B_{ji}) = \varphi(A_{ij})\varphi(B_{ji})$.
\end{lemma}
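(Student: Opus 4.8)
The plan is to exhibit the product $A_{ij}B_{ji}$, which lies in $\A_{ii}$ because $A_{ij} \in P_i\A P_j$ and $B_{ji} \in P_j\A P_i$, as a scalar multiple of an iterated $*$-Jordan product anchored at $P_i$, and then to push that identity through $\varphi$ using the defining property together with Lemmas \ref{lemam0} and \ref{lemam1}.

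First I would evaluate $q_{n*}(P_i,\ldots,P_i,A_{ij},B_{ji})$ with $n-2$ leading copies of $P_i$. Since $P_i^* = P_i$, the telescoping already seen in Lemma \ref{lemam0} gives $q_{(n-2)*}(P_i,\ldots,P_i) = 2^{n-3}P_i$. As $A_{ij} = P_iA_{ij}P_j$ forces $P_iA_{ij} = A_{ij}$ and $A_{ij}P_i = 0$, the next bracket collapses to $\{2^{n-3}P_i,A_{ij}\}_{*} = 2^{n-3}A_{ij}$; and since both $B_{ji}$ and $A_{ij}^*$ lie in $\A_{ji}$ while $P_iP_j = 0$, the cross term $B_{ji}A_{ij}^*$ vanishes, so the final bracket yields $\{2^{n-3}A_{ij},B_{ji}\}_{*} = 2^{n-3}A_{ij}B_{ji}$ for $n \geq 3$. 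The degenerate case $n=2$ carries no leading $P_i$ and gives directly $q_{2*}(A_{ij},B_{ji}) = A_{ij}B_{ji}$.

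Next I would apply $\varphi$ to the identity $q_{n*}(P_i,\ldots,P_i,A_{ij},B_{ji}) = 2^{n-3}A_{ij}B_{ji}$, use the defining relation of $\varphi$, and invoke $*$-additivity to pull the integer scalar out, obtaining $2^{n-3}\varphi(A_{ij}B_{ji}) = q_{n*}(Q_i,\ldots,Q_i,\varphi(A_{ij}),\varphi(B_{ji}))$. By Lemma \ref{lemam0} each $Q_i = \varphi(P_i)$ is a projection, and $Q_1 + Q_2 = I_{\A'}$ by unitality and additivity; by Lemma \ref{lemam1}, $\varphi(A_{ij}) \in \A'_{ij}$ and $\varphi(B_{ji}) \in \A'_{ji}$. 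Hence the very same block computation, now carried out in $\A'$ relative to $Q_1, Q_2$, evaluates the right-hand side as $2^{n-3}\varphi(A_{ij})\varphi(B_{ji})$, and cancelling $2^{n-3}$ leaves $\varphi(A_{ij}B_{ji}) = \varphi(A_{ij})\varphi(B_{ji})$.

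I do not anticipate a real obstacle: the proof is a one-shot transfer of an algebraic identity through $\varphi$, and neither $(\spadesuit)$ nor $(\clubsuit)$ is required. The only care needed is the bookkeeping of the constant $2^{n-3}$ (with $n=2$ handled separately, since there $2^{n-3}$ is not an integer and the scalar cannot be pulled through additivity) and the repeated use of the orthogonality $Q_iQ_j = 0$ to kill the two cross terms $B_{ji}A_{ij}^*$ and $\varphi(B_{ji})\varphi(A_{ij})^*$.
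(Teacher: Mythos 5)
Your proof is correct and takes essentially the same route as the paper's: both transfer the identity $q_{n*}(P_i,\ldots,P_i,A_{ij},B_{ji}) = 2^{n-3}A_{ij}B_{ji}$ through $\varphi$ via the defining relation and additivity, then use Lemma \ref{lemam1} (block preservation) to collapse the image-side Jordan product to $2^{n-3}\varphi(A_{ij})\varphi(B_{ji})$ and cancel the scalar. Your separate treatment of $n=2$ is in fact slightly more careful than the paper, whose displayed constant $2^{n-3}$ tacitly assumes $n \geq 3$.
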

\begin{proof}
Let $A_{ij} \in \A_{ij}$ and $B_{ji} \in \A_{ji}$, with $i \neq j$. Then, by Lemma \ref{lemam1} and additivity of $\varphi$, 
\end{proof}
$$
\begin{aligned}
2^{n-3}\varphi(A_{ij}B_{ji}) &= \varphi(2^{n-3}A_{ij}B_{ji}) = \varphi(q_{n*}(P_i,...,P_i,A_{ij},B_{ji})) \\
     &= q_{n*}(\varphi(P_i),...,\varphi(P_i),\varphi(A_{ij}),\varphi(B_{ji})) = 2^{n-3}\varphi(A_{ij})\varphi(B_{ji}).
\end{aligned}
$$
Therefore,
$$
\varphi(A_{ij}B_{ji}) = \varphi(A_{ij})\varphi(B_{ji}).
$$

\begin{lemma}\label{lemam5}
If $A_{ij} \in \A_{ij}$ and $B_{jj} \in \A_{jj}$, with $i \neq j$, then $\varphi(A_{ij}B_{jj}) = \varphi(A_{ij})\varphi(B_{jj})$
\end{lemma}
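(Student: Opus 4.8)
The plan is to follow the template of Lemma~\ref{lemam3}: I would reduce the desired multiplicativity to an identity of the form $D\,\varphi(C)=0$ for a suitable family of test elements $C$, and then invoke $(\clubsuit)$ to force $D=0$. To this end, fix $A_{ij}\in\A_{ij}$ and $B_{jj}\in\A_{jj}$ with $i\neq j$, and note that $A_{ij}B_{jj}\in\A_{ij}$. Set $D:=\varphi(A_{ij}B_{jj})-\varphi(A_{ij})\varphi(B_{jj})$. By Lemma~\ref{lemam1} we have $\varphi(A_{ij}B_{jj})\in\A'_{ij}$, while $\varphi(A_{ij})\varphi(B_{jj})\in\A'_{ij}\A'_{jj}\subseteq\A'_{ij}$; hence $D\in\A'_{ij}$.

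Next, for an arbitrary test element $C_{ji}\in\A_{ji}$ I would evaluate $\varphi\big((A_{ij}B_{jj})C_{ji}\big)$ in two ways. Since $A_{ij}B_{jj}\in\A_{ij}$ and $C_{ji}\in\A_{ji}$, Lemma~\ref{lemam4} gives
$$
\varphi\big((A_{ij}B_{jj})C_{ji}\big)=\varphi(A_{ij}B_{jj})\,\varphi(C_{ji}).
$$
On the other hand, regrouping as $A_{ij}(B_{jj}C_{ji})$ with $B_{jj}C_{ji}\in\A_{ji}$, a second application of Lemma~\ref{lemam4} followed by Lemma~\ref{lemam2} (with the indices $i$ and $j$ interchanged, so that $\varphi(B_{jj}C_{ji})=\varphi(B_{jj})\varphi(C_{ji})$) yields
$$
\varphi\big(A_{ij}(B_{jj}C_{ji})\big)=\varphi(A_{ij})\,\varphi(B_{jj})\,\varphi(C_{ji}).
$$
Subtracting, I obtain $D\,\varphi(C_{ji})=0$ for every $C_{ji}\in\A_{ji}$.

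Finally I would promote this to $D\,\A'\,\varphi(P_i)=\{0\}$. Because $\varphi$ is additive (Theorem~\ref{mainthm1}) and, by Lemma~\ref{lemam1}, carries each $\A_{ji}$ into $\A'_{ji}$, bijectivity forces $\varphi(\A_{ji})=\A'_{ji}$; hence the elements $\varphi(C_{ji})$ exhaust $\A'_{ji}$ and $D\,\A'_{ji}=\{0\}$. Since $D=DQ_j$ (as $D\in\A'_{ij}$), for each $S\in\A'$ one has $DS\varphi(P_i)=DSQ_i=D(Q_jSQ_i)\in D\,\A'_{ji}=\{0\}$; thus $D\,\A'\,\varphi(P_i)=\{0\}$, and $(\clubsuit)$ gives $D=0$, i.e.\ $\varphi(A_{ij}B_{jj})=\varphi(A_{ij})\varphi(B_{jj})$.

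The step I expect to be the genuine obstacle is the very first reduction, because a direct evaluation of a single $q_{n*}$ bracket---which sufficed in Lemmas~\ref{lemam2} and~\ref{lemam4}---fails here: any attempt to realize $A_{ij}B_{jj}$ as $q_{n*}(P,\dots,P,A_{ij},B_{jj})$ produces $2^{n-3}\big(A_{ij}B_{jj}+B_{jj}A_{ij}^{*}\big)$, whose cross term $B_{jj}A_{ij}^{*}$ lies in $\A_{ji}$ and cannot be isolated away. This is exactly why the argument must route through the test element $C_{ji}$ and the primeness-type hypothesis $(\clubsuit)$ rather than through multiplicativity of $\varphi$ on one bracket.
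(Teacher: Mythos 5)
Your proof is correct and takes essentially the same route as the paper's: both test the difference $D=\varphi(A_{ij}B_{jj})-\varphi(A_{ij})\varphi(B_{jj})$ against elements of $\A_{ji}$, use Lemma \ref{lemam4} together with Lemma \ref{lemam2} (indices interchanged) to get $D\,\varphi(C_{ji})=0$, and then conclude via Lemma \ref{lemam1} and $(\clubsuit)$. Your version merely spells out the final step (that $\varphi(\A_{ji})=\A'_{ji}$ and hence $D\,\A'\varphi(P_i)=\{0\}$) which the paper leaves implicit.
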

\begin{proof}
Let $X$ be an element of $\A_{ji}$, with $i \neq j$. Using Lemmas \ref{lemam2} and \ref{lemam4} we obtain
$$
\varphi(A_{ij}B_{jj})\varphi(X) = \varphi(A_{ij}B_{jj}X) = \varphi(A_{ij})\varphi(B_{jj}X) = \varphi(A_{ij})\varphi(B_{jj})\varphi(X),
$$
that is,
$$
(\varphi(A_{ij}B_{jj}) - \varphi(A_{ij})\varphi(B_{jj}))\varphi(X) = 0.
$$
Now, by Lemma \ref{lemam1}, since $\varphi(X) \in \A'_{ji}$ and $\varphi(A_{ij}B_{jj}) - \varphi(A_{ij})\varphi(B_{jj}) \in \A'_{ij}$, we have
$$
(\varphi(A_{ij}B_{jj}) - \varphi(A_{ij})\varphi(B_{jj}))\A'\varphi(P_i) = \{0\}.
$$
Finally, $\left(\clubsuit \right)$ ensures that $\varphi(A_{ij}B_{jj}) = \varphi(A_{ij})\varphi(B_{jj})$.
\end{proof}

Thus, by additivity of $\varphi$, proved in the Theorem $\ref{mainthm1}$, and the lemmas above we conclude that $\varphi(AB) = \varphi(A)\varphi(B)$. Therefore $\varphi$ is a $*$-ring isomorphism.

\section{Corollaries} 

Let us present some consequences of the our main result. The first one provides the conjecture that appears in \cite{Ferco} to the case of multiplicative $*$-Jordan-type maps:

\begin{corollary}   
Let $\A$ and $\A'$ be two  
$C^*$-algebras with identities $I_{\A}$ and $I_{\A'}$, respectively, and $P_1$ and $P_2 = I_{\A} - P_1$ 
nontrivial projections in $\A$. Suppose that $\A$ and $\A'$ satisfy:
\begin{eqnarray*}
  &&\left(\spadesuit\right) \ \ \  \ \ \  X \A P_i = \left\{0\right\} \ \ \  \mbox{implies} \ \ \ X = 0 
  \\
  \mbox{ and}
	\\&& \left(\clubsuit \right) \ \ \  \ Y \A' \varphi(P_i) = \left\{0\right\} \ \ \  \mbox{implies} \ \ \ Y = 0.
\end{eqnarray*}
Then $\varphi: \A \rightarrow \A'$ is a bijective unital multiplicative $*$-Jordan $n$-map if and only if $\varphi$ is a
$*$-ring isomorphism.
\end{corollary}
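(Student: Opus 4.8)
The plan is to prove the two implications separately, since the statement is a biconditional, and in each direction to reduce the claim to results already established rather than to redo any computation.

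For the forward direction, suppose $\varphi$ is a bijective unital multiplicative $*$-Jordan $n$-map, so that $\varphi(q_{n*}(x_1,\dots,x_n)) = q_{n*}(\varphi(x_1),\dots,\varphi(x_n))$ holds for \emph{all} choices of $x_1,\dots,x_n \in \A$. The key observation is that the two identities demanded in the hypothesis of Theorem \ref{mainthm2} are merely special instances of this: taking $x_1 = \cdots = x_{n-2} = I_\A$, $x_{n-1} = A$, $x_n = B$ yields the first identity, and taking $x_1 = \cdots = x_{n-2} = P$ with $P \in \{P_1, P_2\}$, $x_{n-1} = A$, $x_n = B$ yields the second. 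Hence $\varphi$ satisfies precisely the hypotheses of Theorem \ref{mainthm2}, and since $\A$ and $\A'$ satisfy $(\spadesuit)$ and $(\clubsuit)$, that theorem immediately delivers that $\varphi$ is a $*$-ring isomorphism. This direction therefore requires no work beyond recognizing the specialization.

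For the converse, suppose $\varphi$ is a $*$-ring isomorphism, so $\varphi$ is a bijection preserving addition, the product, and the involution. Bijectivity is part of the hypothesis, and unitality follows because any ring isomorphism sends the identity to the identity: from $\varphi(I_\A)\varphi(A) = \varphi(I_\A A) = \varphi(A)$ together with surjectivity one gets $\varphi(I_\A) = I_{\A'}$. It then remains to verify the full multiplicative $*$-Jordan $n$-map identity, which I would establish by induction on $n$ using the recursion $q_{n*}(x_1,\dots,x_n) = \{q_{(n-1)*}(x_1,\dots,x_{n-1}), x_n\}_*$. The base case $n = 2$ reads $\varphi(\{x_1,x_2\}_*) = \varphi(x_1 x_2 + x_2 x_1^*) = \varphi(x_1)\varphi(x_2) + \varphi(x_2)\varphi(x_1)^* = \{\varphi(x_1),\varphi(x_2)\}_*$, which is exactly preservation of products, sums, and $*$. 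For the inductive step, writing $W = q_{(n-1)*}(x_1,\dots,x_{n-1})$ and applying the same three-fold preservation to $\{W, x_n\}_* = W x_n + x_n W^*$, then substituting the inductive hypothesis $\varphi(W) = q_{(n-1)*}(\varphi(x_1),\dots,\varphi(x_{n-1}))$, closes the induction.

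Because both directions rest on facts already in hand, there is essentially no hard step remaining here; the substance lives entirely in Theorem \ref{mainthm2}, whose chain of additivity and multiplicativity lemmas does all the heavy lifting. The only point that genuinely demands attention is the forward direction's reduction, where one must confirm that the full $n$-variable hypothesis specializes cleanly to exactly the two restricted identities the theorem requires, with the remaining $n-2$ slots filled by $I_\A$ or by $P$ as appropriate (an empty filling when $n = 2$, in which case the hypothesis and the full map property coincide).
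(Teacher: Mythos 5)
Your proposal is correct and takes essentially the same route as the paper, which states this corollary as an immediate consequence of Theorem \ref{mainthm2} without spelling out details: the forward direction is exactly your observation that the full $*$-Jordan $n$-map identity specializes (by choosing the first $n-2$ arguments to be $I_\A$ or $P$) to the two identities hypothesized in Theorem \ref{mainthm2}, and the converse is the routine induction showing that a $*$-ring isomorphism preserves $q_{n*}$. Both halves are filled in correctly, including the unitality point in the converse.
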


Observing that prime $C^*$-algebras satisfy $(\spadesuit), (\clubsuit)$ we have the following result: 

\begin{corollary} 
Let $\A$ and $\A'$ be prime $C^*$-algebras with identities $I_{\A}$ and $I_{\A'}$, respectively, and $P_1$ and $P_2 = I_{\A} - P_1$ nontrivial projections in $\A$. Then $\varphi: \A \rightarrow \A'$ is a bijective unital multiplicative $*$-Jordan $n$-map if and only if $\varphi$ is a $*$-ring isomorphism.
\end{corollary}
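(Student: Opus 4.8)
The plan is to derive this corollary directly from the preceding one, whose conclusion is exactly the desired equivalence once the hypotheses $(\spadesuit)$ and $(\clubsuit)$ are available. Thus the whole problem reduces to verifying that a pair of prime $C^*$-algebras, together with a bijective unital map $\varphi$, automatically satisfies these two conditions. Recalling that $\A$ prime means that $A\A B = 0$ forces $A = 0$ or $B = 0$, condition $(\spadesuit)$ is immediate: if $X\A P_i = \{0\}$, then since $P_i$ is a nontrivial projection and hence $P_i \neq 0$, primeness of $\A$ yields $X = 0$.

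For $(\clubsuit)$ the same argument would work in $\A'$ provided one knows that $\varphi(P_i) \neq 0$, and this is the single point that needs care. First I would invoke Lemma \ref{Claim1} (whose argument rests only on unitality of $\varphi$ together with the multiplicative identity) to obtain $\varphi(0) = 0$. Since $\varphi$ is injective and $P_i \neq 0$, it then follows that $\varphi(P_i) \neq \varphi(0) = 0$. Consequently, for any $Y \in \A'$ with $Y\A'\varphi(P_i) = \{0\}$, primeness of $\A'$ together with $\varphi(P_i) \neq 0$ forces $Y = 0$, which is precisely $(\clubsuit)$.

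With $(\spadesuit)$ and $(\clubsuit)$ in hand, the preceding corollary gives that a bijective unital multiplicative $*$-Jordan $n$-map $\varphi$ is a $*$-ring isomorphism, establishing the forward implication. The converse is routine: a $*$-ring isomorphism is by definition bijective and additive, it carries $I_{\A}$ to $I_{\A'}$ so that it is unital, and because each $q_{n*}$ is assembled solely from products, involutions and differences, any map preserving these three operations also preserves $q_{n*}$; hence $\varphi$ is a multiplicative $*$-Jordan $n$-map. The only genuinely nontrivial content lies in the forward direction, and that is furnished entirely by the preceding corollary, so the main obstacle is the bookkeeping point above, namely guaranteeing $\varphi(P_i) \neq 0$ so that primeness of $\A'$ may be applied to recover $(\clubsuit)$.
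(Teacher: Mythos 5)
Your proposal is correct and takes essentially the same route as the paper: the paper's entire justification is the one-line observation that prime $C^*$-algebras satisfy $(\spadesuit)$ and $(\clubsuit)$, after which the preceding corollary (equivalently Theorem \ref{mainthm2}) applies, which is exactly your reduction. Your extra care in securing $\varphi(P_i) \neq 0$ (via Lemma \ref{Claim1} and injectivity) so that primeness of $\A'$ really yields $(\clubsuit)$ is a detail the paper glosses over, and your check of the routine converse is likewise fine.
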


A von Neumann algebra $\mathcal{M}$ is a weakly closed, self-adjoint algebra of operators on a Hilbert space $\mathcal{H}$ containing the identity operator $I$. As an application on von Neumann algebras we have the following: 

\begin{corollary}
Let $\mathcal{M}$ be a von Neumann algebra without central summands of type $I_1$. Then $\varphi: \mathcal{M} \rightarrow \mathcal{M}$ is a bijective unital multiplicative $*$-Jordan $n$-map if and only if $\varphi$ is a $*$-ring isomorphism.
\end{corollary}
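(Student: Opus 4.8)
The forward implication is immediate: a $*$-ring isomorphism is bijective, carries the identity to the identity, and commutes with sum, product and involution, hence with the polynomial $q_{n*}$; so it is a bijective unital multiplicative $*$-Jordan $n$-map. The plan for the converse is to reduce to the first corollary of this section by verifying that $\mathcal{M}$, in the roles of both $\A$ and $\A'$, satisfies $(\spadesuit)$ and $(\clubsuit)$.

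First I would fix a projection via the classical structural fact that a von Neumann algebra without central summands of type $I_1$ contains a nontrivial projection $P_1$ whose central carrier, together with that of $P_2 = I_\mathcal{M} - P_1$, equals $I_\mathcal{M}$. The point is that $X\mathcal{M}P_i = \{0\}$ is equivalent to $X\,C(P_i)=0$ (the weak-$*$ closure of the two-sided ideal generated by $P_i$ is $\mathcal{M}\,C(P_i)$, and multiplication by a fixed element is $\sigma$-weakly continuous), so $C(P_i)=I_\mathcal{M}$ is exactly condition $(\spadesuit)$. With $(\spadesuit)$ in hand, Theorem \ref{mainthm1} shows that $\varphi$ is $*$-additive.

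The main obstacle is $(\clubsuit)$, which concerns the images $\varphi(P_i)$ rather than the $P_i$ themselves: one must show $C(\varphi(P_i))=I_\mathcal{M}$, and a priori there is no reason for an arbitrary complementary pair of projections to have full central carrier. To get around this I would exploit the symmetry of the hypotheses: $\varphi^{-1}$ is again a bijective unital multiplicative $*$-Jordan $n$-map, and since the codomain $\mathcal{M}$ also has no central summand of type $I_1$, applying the same structural fact there and Theorem \ref{mainthm1} to $\varphi^{-1}$ makes $\varphi^{-1}$ $*$-additive as well. Now both $\varphi$ and $\varphi^{-1}$ are $*$-additive and unital, so each carries projections to projections and, by additivity, preserves orthogonality, order and orthocomplementation; being mutually inverse, they realize $\varphi$ as an order automorphism of the projection lattice $\mathcal{P}(\mathcal{M})$ that respects $P \mapsto I_\mathcal{M}-P$.

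Finally I would use that such an automorphism maps the lattice-center of $\mathcal{P}(\mathcal{M})$ onto itself, i.e. it sends central projections to central projections, and that it commutes with arbitrary meets. Since the central carrier is the meet of all central projections dominating a given projection, it follows that $C(\varphi(P_i))=\varphi(C(P_i))=\varphi(I_\mathcal{M})=I_\mathcal{M}$, which is precisely $(\clubsuit)$. With $(\spadesuit)$ and $(\clubsuit)$ both established for $\mathcal{M}$, the first corollary of this section applies and yields that $\varphi$ is a $*$-ring isomorphism. The two delicate points to write out carefully are the equivalence of $(\spadesuit)$ and $(\clubsuit)$ with the central-carrier conditions, and the claim that a projection-lattice automorphism preserves central carriers.
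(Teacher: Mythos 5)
Your proposal is correct, and it is in fact more complete than the paper's own argument, so the comparison is worth spelling out. The paper's proof is essentially one line: it cites Bai--Du and Martindale for the existence of a nontrivial projection $P_1$ in $\mathcal{M}$ with $X\mathcal{M}P_i = \{0\} \Rightarrow X = 0$ for $i=1,2$ (your condition $C(P_1)=C(P_2)=I_{\mathcal{M}}$, i.e. $(\spadesuit)$), and then invokes Theorem \ref{mainthm2} -- without ever addressing $(\clubsuit)$. As you correctly identify, $(\clubsuit)$ is a genuine issue: it concerns the images $\varphi(P_i)$, and a complementary pair of projections in such an algebra need not have full central carriers (take $(I_{\mathcal{H}},0)$ and $(0,I_{\mathcal{K}})$ in $\mathcal{B}(\mathcal{H})\oplus\mathcal{B}(\mathcal{K})$), so $(\clubsuit)$ cannot follow from the structure of $\mathcal{M}$ alone. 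Your route -- pass to $\varphi^{-1}$, which is again a bijective unital multiplicative $*$-Jordan $n$-map, obtain $*$-additivity of both maps from Theorem \ref{mainthm1}, deduce that $\varphi$ restricts to an ortho-order automorphism of the projection lattice $\mathcal{P}(\mathcal{M})$, and conclude $C(\varphi(P_i)) = \varphi(C(P_i)) = I_{\mathcal{M}}$ -- soundly closes this gap; the lattice facts you defer (lattice-center of $\mathcal{P}(\mathcal{M})$ equals the central projections, and ortho-order isomorphisms preserve compatibility and arbitrary meets) are standard. One point to tighten in your write-up: being ``$*$-additive and unital'' does not by itself imply that projections map to projections; you also need the Jordan $n$-map identity, exactly as in Lemma \ref{lemam0}, namely $2^{n-1}\varphi(P) = \varphi(q_{n*}(I_{\mathcal{M}},\ldots,I_{\mathcal{M}},P,P)) = 2^{n-2}\bigl(\varphi(P)^2 + \varphi(P)\varphi(P)^*\bigr)$ together with $\varphi(P)^*=\varphi(P)$, which is available to you since $\varphi$ (and likewise $\varphi^{-1}$) preserves $q_{n*}$ on all tuples. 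In short, the paper buys brevity by silently omitting $(\clubsuit)$; your argument costs some projection-lattice theory but actually verifies every hypothesis of Theorem \ref{mainthm2}.
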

\begin{proof}
Let $\mathcal{M}$ be the von Neumann algebra. It is shown in \cite{Bai} and \cite{Mie} that if a von Neumann algebra has no central summands of type $I_1$, then $\mathcal{M}$ satifies the following assumption: 
\begin{itemize}
\item $X \mathcal{M}P_i = \left\{0\right\} \Rightarrow X = 0$.
\end{itemize}
Thus, by Theorem \ref{mainthm2} the corollary is true.
\end{proof}

 To finish, $\mathcal{M}$ is a factor von Neumann algebra if its center only contains the scalar operators. It is well known that a factor von Neumann algebra is prime and then we have the following:

\begin{corollary}
Let $\mathcal{M}$ be a factor von Neumann algebra. Then $\varphi: \mathcal{M} \rightarrow \mathcal{M}$ is a bijective unital multiplicative $*$-Jordan $n$-map if and only if $\varphi$ is a $*$-ring isomorphism.
\end{corollary}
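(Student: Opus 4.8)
The plan is to obtain this corollary as an immediate specialization of Theorem \ref{mainthm2}, so the whole task reduces to checking that a factor von Neumann algebra satisfies the hypotheses of that theorem. The decisive structural fact is that every factor von Neumann algebra is a \emph{prime} algebra; granting this, both conditions $(\spadesuit)$ and $(\clubsuit)$ follow at once and the statement is proved.

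First I would recall why a factor $\mathcal{M}$ is prime. One argues through the central support (central cover) $c(\cdot)$: in any von Neumann algebra, $A\mathcal{M}B = \{0\}$ forces the central supports $c(A)$ and $c(B)$ to be orthogonal (reducing to the case of the right support of $A$ and the left support of $B$, for which $c(r(A))\,c(\ell(B)) = 0$ is the classical criterion), and in a factor the center is trivial, so $c(T) = I$ for every nonzero $T \in \mathcal{M}$. Hence if $A\mathcal{M}B = \{0\}$ with $A \neq 0$ and $B \neq 0$ we would get $I = c(A)c(B) = 0$, a contradiction; thus $A = 0$ or $B = 0$ and $\mathcal{M}$ is prime. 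This is the only genuinely nontrivial point, and it is classical.

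Next, taking $\A = \A' = \mathcal{M}$ and fixing a nontrivial projection $P_1$ (which exists whenever $\mathcal{M} \neq \mathbb{C}$, the trivial type $I_1$ factor being the only exception), I would verify the two conditions. For $(\spadesuit)$: if $X\mathcal{M}P_i = \{0\}$, then since $P_i \neq 0$ primeness gives $X = 0$. For $(\clubsuit)$: the element $\varphi(P_i)$ is nonzero, because $\varphi$ is injective and $\varphi(0) = 0$ by Lemma \ref{Claim1} (and it is in fact a projection by Lemma \ref{lemam0}); hence $Y\mathcal{M}\varphi(P_i) = \{0\}$ together with primeness yields $Y = 0$. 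With $(\spadesuit)$ and $(\clubsuit)$ in hand, Theorem \ref{mainthm2} shows that every bijective unital multiplicative $*$-Jordan $n$-map $\varphi \colon \mathcal{M} \to \mathcal{M}$ is a $*$-ring isomorphism.

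Finally, the converse is routine: a $*$-ring isomorphism is bijective and unital, and since $q_{n*}$ is built solely from the ring multiplication and the involution, both of which a $*$-ring isomorphism preserves, it automatically satisfies the defining identity of a multiplicative $*$-Jordan $n$-map. The main obstacle is therefore not in this corollary at all but was already overcome in Theorem \ref{mainthm2}; here the work is confined to the primeness of a factor and the straightforward verification of $(\spadesuit)$ and $(\clubsuit)$.
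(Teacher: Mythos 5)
Your proposal is correct and follows essentially the same route as the paper: the paper also derives this corollary from the primeness of factor von Neumann algebras (via its corollary for prime $C^*$-algebras, which in turn rests on the observation that primeness gives $(\spadesuit)$ and $(\clubsuit)$, so that Theorem \ref{mainthm2} applies). You merely spell out the details the paper leaves implicit — the central-support argument for primeness, the nonvanishing of $\varphi(P_i)$ needed for $(\clubsuit)$, and the routine converse direction.
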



\begin{thebibliography}{99}

\bibitem{Bai} 
Z. Bai and S. Du, 
{\em Strong commutativity preserving maps on rings}, 
Rocky Mountain J. Math., \textbf{44}, (2014), 733-742.

\bibitem{brefos1} 
M. Bre\v sar and M. Fo\v sner, 
{\em On rings with involution equipped with some new product}, 
Publicationes Math., \textbf{57} (2000).

\bibitem{chang} 
Q. Chen and C. Li,  
{\em Additivity of Lie multiplicative mappings on rings}, 
Adv. in Math.(China),  \textbf{46(1)} (2017), 82-90.

\bibitem{LiLuFang} 
X. Fang, C. Li and F. Lu, 
{\em Nonlinear mappings preserving product $XY + YX^{*}$ on factor von Neumann algebras}, 
Linear Algebra and its Applications, \textbf{438}, (2013), 2339-2345.


\bibitem{Ferco} 
B. L. M. Ferreira and B. T. Costa, 
{\em $*$-Lie-Jordan-type maps on $C^{*}$-algebras}, 
arXiv:2003.11123 [math.OA]

\bibitem{Fer3} 
B. L. M. Ferreira, 
{\em Multiplicative maps on triangular n-matrix rings}, 
International Journal of Mathematics, Game Theory and Algebra, \textbf{23}, p. 1-14, 2014.

\bibitem{Fer} 
J. C. M. Ferreira and B. L. M. Ferreira, 
{\em Additivity of $n$-multiplicative maps on alternative rings}, 
Comm. in Algebra, \textbf{44} (2016), 1557-1568.

\bibitem{bruth} 
R. N. Ferreira and B. L. M. Ferreira, 
{\em Jordan triple derivation on alternative rings}, 
Proyecciones J. Math., \textbf{37} (2018), 171-180.

\bibitem{Fer1} 
B. L. M. Ferreira, J. C. M.  Ferreira, H. Guzzo Jr., 
{\em Jordan maps on alternatives algebras}, 
JP Journal of Algebra, Number Theory and Applications, \textbf{31}, p. 129-142, 2013.

\bibitem{Fer2} 
B. L. M. Ferreira, J. C. M.  Ferreira, H. Guzzo Jr., 
{\em Jordan triple elementary maps on alternative rings}, 
Extracta Mathematicae, \textbf{29}, p. 1-18, 2014.

\bibitem{Fer4} 
B. L. M. Ferreira, J. C. M.  Ferreira, H. Guzzo Jr., 
{\em Jordan triple maps of alternatives algebras}, 
JP Journal of Algebra, Number Theory and Applications, \textbf{33}, p. 25-33, 2014.

\bibitem{FerGur1} 
B. L. M. Ferreira and H. Guzzo Jr., 
{\em Lie maps on alternative rings}, 
Boll. Unione Mat. Ital., (2019), 1-12.

\bibitem{brefos2} 
M. Fo\v sner, 
{\em Prime Rings with Involution Equipped with Some New Product}, 
Southeast Asian Bull. Math., \textbf{26} (2002), 27-31.
















\bibitem{Mie} 
W. S. Martindale III, 
{\it Lie isomorphisms of operator algebras} 
Pacific J. Math \textbf{38} (1971), 717-735.


\bibitem{Mart} 
W. S. Martindale III, 
{\it When are multiplicative mappings additive?} 
Proc. Amer. Math. Soc. \textbf{21} (1969), 695-698.

























\end{thebibliography}
\end{document}